\newtheorem{defi}{Definition}
\newtheorem{thm}[defi]{Theorem}
\newtheorem*{thm*}{Theorem}
\newtheorem{prop}[defi]{Proposition}
\newtheorem*{prop*}{Proposition}
\newtheorem{lem}[defi]{Lemma}
\newtheorem{rem}[defi]{Remark}
\newtheorem*{rem*}{Remark}
\newtheorem{cor}[defi]{Corollary}
\newtheorem*{exa*}{Example}
\newtheorem*{fact*}{Fact}
\newtheorem*{claim*}{Claim}
\begin{document}

\title[$d$-minimal surfaces in $\mathbb{R}^{0,2,1}$]{$d$-minimal surfaces in three-dimensional singular semi-Euclidean space $\mathbb{R}^{0,2,1}$}
\author[Y. Sato]{Yuichiro Sato}
\address{Department of Mathematical Sciences,
Tokyo Metropolitan University,
Minami-Osawa 1-1, Hachioji, Tokyo, 192-0397, Japan.}
\email{satou-yuuichirou@ed.tmu.ac.jp}

\subjclass[2010]{Primary 53A10; Secondary 53A40}

\date{}

\keywords{minimal surface, isotropic geometry, semi-Riemannian geometry}

\begin{abstract}
In this paper, we study surfaces in singular semi-Euclidean space $\mathbb{R}^{0,2,1}$ endowed with a degenerate metric. We define $d$-minimal surfaces, and give a representation formula of Weierstrass type. Moreover, we prove that $d$-minimal surfaces in $\mathbb{R}^{0,2,1}$ and spacelike flat zero mean curvature (ZMC) surfaces in four-dimensional Minkowski space $\mathbb{R}^{4}_{1}$ are in one-to-one correspondence.
\end{abstract}

\maketitle 

\section{Introduction}
In this paper, we investigate surfaces in three-dimensional singular semi-Euclidean space with the signature $(0,2,1)$. 
The history of surface theory is very long, and the research has been studied. Minimal surfaces which attain stationary values for the volume functional of surfaces have many results of the research. In particular, they are characterized by having the mean curvature vector field which vanishes identically. Recently, Umehara and Yamada et al. (\cite{UY}, \cite{FSUY} and \cite{FKKRUY} etc.) study the zero mean curvature surfaces in three-dimensional Minkowski space actively. 
For such surfaces, they showed that singularities appear generically, and relate to the topology of surfaces. 

On the other hand, the author \cite{Sato} classified ruled minimal surfaces in semi-Euclidean space. As a consequence, we obtained that certain surfaces are included in three-dimensional subspaces whose metrics are degenerate forms. Inspired by this fact, we study the singular differential geometry, i.e. allow to have degenerate metrics. In particular, we consider the surface theory. We introduce a degenerate metric $dx^{2} + dy^{2}$ to three-dimensional vector space $\mathbb{R}^{3}$ with the coordinates $(x, y, z)$. We call the pair $(\mathbb{R}^{3}, dx^{2} + dy^{2})$ three-dimensional singular semi-Euclidean space with the signature $(0,2,1)$. It is denoted by $\mathbb{R}^{0,2,1}$. Let $M$ be a surface in $\mathbb{R}^{0,2,1}$. We assume that the induced metric of $M$ is non-degenerate. Actually, this degenerate geometry is equivalent to simply isotropic geometry which is one of the Cayley-Klein geometries. 
For isotropic geometry, the well-known reference is \cite{Sach}. 
We reformulate in terms of the geometry using metrics and connections. 

Here, we remark how to use the terms. 
First, in the canonical three-dimensional Euclidean space $\mathbb{R}^{3}$, surfaces whose mean curvature vanishes identically give stationary values for the volume functional. In a certain situation, its value is minimal, but not extreme in general. Historically, we call such surfaces \textit{minimal}.

Next, in three-dimensional Minkowski space $\mathbb{R}^{3}_{1}$, surfaces whose mean curvature vanishes identically change its name with respect to the cases of the induced metrics. When the induced metric is spacelike, i.e. Riemannian, we call such surfaces \textit{maximal}. This means that, when we consider the volume functional analogically, such surfaces always give maximal values unlike the Euclidean case. On the other hand, when timelike, i.e. Lorentzian, we simply call such surfaces \textit{minimal}. We should remark that timelike minimal surfaces give stationary values for the volume functional, but give neither minimal nor maximal values. When connected surfaces have the part of spacelike maximal surfaces and that of timelike minimal surfaces, we call such surfaces \textit{mixed type} (\cite{FKKRUY}).

In four-dimensional Minkowski space $\mathbb{R}^{4}_{1}$, surfaces whose mean curvature vector field vanishes identically are more complicated. Therefore, in order to treat uniformly, we call all such surfaces \textit{zero mean curvature} when the ambient space is $\mathbb{R}^{4}_{1}$. This is why we have to pay attention to the terminology.

In the section two, we recall the fundamental fact in semi-Riemannian geometry, and recall properties of non-degenerate submanifolds. In particular, we explain the singular semi-Euclidean space. 

In the section three, this is the main section. We define non-degenerate surfaces in $\mathbb{R}^{0,2,1}$ and study their properties in detail. In particular, $d$-minimal surfaces which we define are analogical objects to classical minimal surfaces. They are called \textit{isotropic minimal surfaces} in terms of simply isotropic geometry (\cite{Sach}). In addition to, we show a representation formula of Weierstrass type for $d$-minimal surfaces (Theorem \ref{thm4.2.2}), and claim that $d$-minimal surfaces allow to have isolated singularities. As an application, we prove that $d$-minimal surfaces and spacelike flat zero mean curvature (ZMC) surfaces in four-dimensional Minkowski space are in one-to-one correspondence (Corollary \ref{cor}). In particular, we see that there exist infinitely many spacelike flat ZMC surfaces in $\mathbb{R}^{4}_{1}$, which are not congruent each other. 

Because of these consequences, we see that spacelike flat ZMC surfaces in $\mathbb{R}^{4}_{1}$ are contained in a three-dimensional subspace endowed with a degenerate induced metric. However, we remark that it is the known fact by \cite{AP} and \cite{MWW}. 
And, local expressions are given by \cite{AP}, however, we study global expressions such as the representation formula and having singularities.

From Table \ref{tab:1}, we see that $d$-minimal surfaces in $\mathbb{R}^{0,2,1}$ have neutral properties between minimal surfaces in $\mathbb{R}^{3}$ and maximal surfaces in $\mathbb{R}^{3}_{1}$.
Regarding singularities, they do not appear on minimal surfaces. However, on maximal surfaces, cuspidal edges, swallowtails and cuspidal crosscaps appear in generic case. Refer to \cite{FSUY} in detail. On the other hand, for $d$-minimal surfaces, they allow to have isolated singularities. In this paper, these singularities are not classified. 

\section{Preliminaries}
In this section, we explain the fundamental properties for semi-Riemannian manifolds and their non-degenerate submanifolds.

\subsection{Semi-Riemannian manifolds}
Let $(M, g)$ be an $n$-dimensional semi-Riemannian manifold. For each $x \in M$ and a tangent vector $X \in T_{x}M$, we call $X$ 
\begin{eqnarray*}
\textit{spacelike} &:\Leftrightarrow& g(X, X) > 0 \ \textrm{or} \ X = 0, \\
\textit{timelike} &:\Leftrightarrow& g(X, X) < 0, \\
\textit{lightlike} \ (\textrm{or} \ \textit{null}) &:\Leftrightarrow& g(X, X) = 0.
\end{eqnarray*}
These are called \textit{causal properties} of tangent vectors (\cite{O}). 
As in the case of Riemannian manifolds, there exists uniquely a torsion-free, and metric connection $\nabla$ for a semi-Riemannian manifold. 
We call $\nabla$ the \textit{Levi-Civita connection} of $(M, g)$. Hereinafter, we consider that connections for semi-Riemannian manifolds are Levi-Civita connections. 

We define the \textit{curvature tensor field} $R$ of a semi-Riemannian manifold $(M, g)$ as 
\[ R(X, Y)Z := \nabla_{X} \nabla_{Y}Z - \nabla_{Y} \nabla_{X}Z - \nabla_{[X, Y]}Z \quad (X, Y, Z \in \Gamma(TM)). \]
Next, for each $x \in M$, let $P$ be a two-dimensional non-degenerate subspace of the tangent vector space $T_{x}M$, and let $\{X, Y\}$ be a basis of $P$. Then, we define the \textit{sectional curvature} $K(P)$ of $P$ as 
\[ K(P) := \frac{g(R(X, Y)Y, X)}{g(X, X)g(Y, Y) - g(X, Y)^{2}}, \]
where a subspace $P \subset T_{x}M$ is called \textit{non-degenerate} if the restriction on $P$ of $g$ is the non-degenerate form, and it called \textit{degenerate} if not so. In particular, when the dimension of $M$ is two, sectional curvatures are called \textit{Gaussian curvatures}. 
We denote the set consisting of smooth functions on $M$ by $C^{\infty}(M)$. For each $u \in C^{\infty}(M)$, we define the \textit{gradient vector field} $\textrm{grad} {u}$ of $u$ as 
\[ g(\textrm{grad} {u}, X) = du(X) \quad (\forall X \in \Gamma(TM)), \]
where $du$ denotes the exterior derivative of $u$. 
Next, for each $X \in \Gamma(TM)$, we define the \textit{divergence} $\textrm{div}{X}$ of $X$ as 
\[ \textrm{div}{X} := \textrm{tr}( (X_{1}, X_{2}) \mapsto g(\nabla_{X_{1}}X, X_{2}) ) \quad (X_{1}, X_{2} \in \Gamma(TM)). \]
For each $u \in C^{\infty}(M)$, we define the \textit{Laplacian} $\Delta_{g}{u}$ of $u$ with respect to $g$ as 
\[ \Delta_{g}{u} := \textrm{div} (\textrm{grad} {u}). \]
When $\Delta_{g}{u} \equiv 0$, we call a function $u$ \textit{harmonic}. 

When let $\{e_{1}, \cdots, e_{n}\}$ be a local orthonormal frame of $(M, g)$, the gradient vector field and the divergence respectively have the following local expressions 
\begin{eqnarray*}
\textrm{grad} {u} &=& \sum_{i=1}^{n} \epsilon_{i} du(e_{i})e_{i}, \\
\textrm{div}{X} &=& \sum_{i=1}^{n} \epsilon_{i} g(\nabla_{e_{i}}X, e_{i}),
\end{eqnarray*}
where $\epsilon_{i} = g(e_{i}, e_{i}) = \pm 1$. 

\subsection{Non-degenerate submanifolds}
Let $M$ be an $m$-dimensional manifold, and let $(N, \bar{g})$ be an $n$-dimensional semi-Riemannian manifold. We assume that a $C^{\infty}$-mapping $f : M \rightarrow N$ is an immersion.  
Then, we call $M$ an \textit{immersed submanifold} in $N$. In particular, when $f$ is injective, and $M$ is homeomorphic to the image $f(M)$ as the subspace of $N$, $M$ is said to be a \textit{embedded submanifold} in $N$. 

We denote the induced metric $f^{\ast}\bar{g}$ on $M$ by $g$. For semi-Riemannian manifolds, we remark that $g$ is not always non-degenerate even if $f$ is an immersion. When the induced metric $g$ is non-degenerate, we call $(M, g)$ a \textit{non-degenerate submanifold}, or a \textit{semi-Riemannian submanifold} of $(N, \bar{g})$. 

Hereinafter, when we describe submanifolds, unless otherwise noted, we consider immersed, non-degenerate submanifolds. Then, for each $x \in M$, a \textit{normal vector space} $T_{x}^{\bot}M$ is defined as 
\[ T_{x}^{\bot}M := \{ v \in T_{f(x)}N \ | \ \bar{g}(df_{x}(w), v) = 0 , \ \forall w \in T_{x}M \}. \]
We obtain a vector bundle $T^{\bot}M = \bigcup_{x \in M}T_{x}^{\bot}M$ of rank $(n - m)$ over $M$. This is called a \textit{normal bundle} of $M$. By this, for each $x \in M$, we have the orthogonal direct sum decomposition 
\[ T_{f(x)}N = T_{x}M \perp T_{x}^{\bot}M, \]
where $\perp$ stands for the orthogonal direct sum. 
In particular, we see that, as the orthogonal direct sum of vector bundles, it holds 
\[ f^{\ast}TN = TM \perp T^{\bot}M, \]
where $f^{\ast}TN$ is the pull-back bundle over $M$ by $f$. 
We denote the Levi-Civita connection of $(N, \bar{g})$ and that of $(M, g)$ by $\bar{\nabla}$ and $\nabla$ respectively. And, we define the set $\Gamma(T^{\bot}M)$ as the whole of smooth sections of the normal bundle $T^{\bot}M$. This section is said to be a \textit{normal vector field} particularly. 

For each $X, Y \in \Gamma(TM), \ \xi \in \Gamma(T^{\bot}M)$, by using the orthogonal direct sum decomposition given above, we have 
\begin{eqnarray}
\bar{\nabla}_{X}Y &=& \nabla_{X}Y + h(X, Y), \label{gauss_form} \\
\bar{\nabla}_{X}\xi &=& -A_{\xi}X + \nabla^{\bot}_{X}\xi, \label{weingarten_form}
\end{eqnarray}
where $h, A_{\xi}$ and $\nabla^{\bot}$ are called the \textit{second fundamental form}, the \textit{shape operator} with respect to $\xi$ and the \textit{normal connection} on $M$ respectively. We call the formula (\ref{gauss_form}) and (\ref{weingarten_form}) \textit{Gauss formula} and \textit{Weingarten formula} of $M$ respectively. 

\subsection{Singular semi-Euclidean spaces}
We define the $n$-dimensional \textit{singular semi-Euclidean space} with the signature $(p, q, r)$ as 
\[ \mathbb{R}^{p,q,r} := \left (\mathbb{R}^{n}, (\cdot , \cdot) = - \sum_{i=1}^{p} dx_{i}^{2} + \sum_{j=p+1}^{p+q} dx_{j}^{2} + \sum_{k=p+q+1}^{n} 0 dx_{k}^{2} \right ), \]
where $n=p+q+r$ and $(x_{1}, \cdots, x_{n})$ expresses the canonical coordinates on $\mathbb{R}^{n}$ (\cite{St}). We remark the following statement:
\begin{itemize}
\item When $r=0$, $\mathbb{R}^{p,q,0}$ is called \textit{semi-Euclidean space} having index $p$, and we denote it by $\mathbb{R}^{n}_{p}$.
\item When $p=r=0$, $\mathbb{R}^{0, n, 0} = \mathbb{R}^{n}_{0}$ is nothing but Euclidean space $\mathbb{R}^{n}$.
\end{itemize}

We remark that $r \geq 1$ if and only if the metric $(\cdot , \cdot )$ is degenerate. 
In isotropic geometry, the notation $\mathbb{R}^{0, n-1, 1}$ is also known as the simply isotropic $n$-space $\mathbb{I}^{n}$ (\cite{Sach}).
From now on, we state fundamental objects for a semi-Euclidean space and its non-degenerate submanifolds. 

For $n$-dimensional semi-Euclidean space $\mathbb{R}^{n}_{p}$ having index $p \ (0 \leq p \leq q)$, we assume that the semi-Euclidean metric is given by 
\[ \langle \cdot, \cdot \rangle_{p} := -\sum_{i=1}^{p} dx_{i}^2 + \sum_{j=p+1}^{n} dx_{j}^2, \]
where $(x_{1}, \cdots, x_{n})$ is the canonical coordinates of $\mathbb{R}^{n}_{p}$. 
A non-zero vector $v$ in $\mathbb{R}^{n}_{p}$ is called \textit{spacelike}, \textit{timelike} and \textit{lightlike} if it satisfies $\langle v, v \rangle_{p} > 0, \langle v, v \rangle_{p} < 0$ and $\langle v, v \rangle_{p} = 0$ respectively. 

The $n$-dimensional semi-Euclidean space $\mathbb{R}^{n}_{1}$ having index one is said to be the $n$-dimensional \textit{Minkowski space}. Moreover, the four-dimensional Minkowski space is closely related to the physics as the flat spacetime model. 

Let $M$ be an $m$-dimensional non-degenerate submanifold in $\mathbb{R}^{n}_{p}$. We denote the Levi-Civita connections for $\mathbb{R}^{n}_{p}$ and $M$ by $\bar{\nabla}$ and $\nabla$ respectively. 
And, let $X, Y, Z, W$ and $\xi, \eta$ be tangent vector fields and normal vector fields on $M$ respectively. 

\textit{Gauss equation}, \textit{Codazzi equation} and \textit{Ricci equation} of $M$ are given by the following 
\begin{eqnarray}
\langle R(X, Y)Z, W \rangle_{p} &=& \langle h(Y, Z), h(X, W) \rangle_{p} - \langle h(X, Z), h(Y, W) \rangle_{p}, \label{gauss_eq} \\
(\nabla_{X}h)(Y, Z) &=& (\nabla_{Y}h)(X, Z), \label{codazzi_eq} \\
\langle R^{\bot}(X, Y)\xi , \eta \rangle_{p} &=& \langle [A_{\xi}, A_{\eta}]X, Y \rangle_{p},  \label{ricci_eq}
\end{eqnarray}
where $R$ and $R^{\bot}$ are curvature tensor fields with respect to connections $\nabla$ and $\nabla^{\bot}$ respectively, and $\nabla_{X}h$ is the covariant derivative of the second fundamental form $h$ for the tangent vector field $X$, i.e. it is defined by
\[ (\nabla_{X}h)(Y, Z) = \bar{\nabla}_{X}h(Y, Z) - h(\nabla_{X}Y, Z) - h(Y, \nabla_{X}Z). \]
Moreover, the normal bundle $T^{\bot}M$ of $M$ is called \textit{flat} if $R^{\bot} \equiv 0$. 

Let $\{ e_{1}, \cdots, e_{m} \}$ be a local orthonormal frame of the tangent bundle $TM$, and let $\{ e_{m+1}, \cdots, e_{n} \}$ be a local orthonormal frame of the normal bundle $T^{\bot}M$. In addition to, setting $\epsilon_{A}:=\langle e_{A}, e_{A} \rangle_{p} = \pm 1$, we use the following range of indices: 
\[ 1 \leq A, B, C, \dots \leq n, \quad 1 \leq i, j, k, \dots \leq m, \quad m+1 \leq \alpha, \beta, \gamma, \dots \leq n.  \]
We denote the connection form of $\nabla$ associated $\{ e_{1}, \cdots, e_{m} \}$ by $\{ \omega_{i}^{j} \}$, and we denote the connection form of $\nabla^{\bot}$ associated $\{ e_{m+1}, \cdots, e_{n} \}$ by $\{ \omega_{\beta}^{\alpha} \}$. Then, from Gauss formula (\ref{gauss_form}) and Weingarten formula (\ref{weingarten_form}), we have 
\begin{eqnarray}
\bar{\nabla}_{e_{k}}e_{i} &=& \sum_{j=1}^{m} \epsilon_{j}\omega_{i}^{j}(e_{k})e_{j} + \sum_{\alpha=m+1}^{n}\epsilon_{\alpha}h_{ik}^{\alpha}e_{\alpha}, \\
\bar{\nabla}_{e_{k}}e_{\beta} &=& - \sum_{j=1}^{m}\epsilon_{j}h_{kj}^{\beta}e_{j} + \sum_{\alpha=m+1}^{n}\epsilon_{\alpha}\omega_{\beta}^{\alpha}(e_{k})e_{\alpha},
\end{eqnarray}
where $h_{ij}^{\alpha}$ are coefficients of the second fundamental form. Moreover, we see that the mean curvature vector field $\vec{H}$ of $M$ is expressed by 
\begin{equation}
\vec{H} = \frac{1}{m}\sum_{\alpha=m+1}^{n} \epsilon_{\alpha} \textrm{tr}A_{\alpha}e_{\alpha},  \label{meancurv}
\end{equation}
where $\textrm{tr}A_{\alpha}$ is the trace of the shape operator $A_{e_{\alpha}}$ with respect to $e_{\alpha}$, i.e. $\textrm{tr}A_{\alpha} = \sum_{i=1}^{m} \epsilon_{i}h_{ii}^{\alpha}$.

\section{$d$-minimal surfaces in singular semi-Euclidean space}
In this section, we consider three-dimensional singular semi-Euclidean space with the signature $(0, 2, 1)$. We define $\mathbb{R}^{0, 2, 1}$ as
\[ \mathbb{R}^{0, 2, 1} := \left ( \mathbb{R}^{3}, (\cdot, \cdot) = dx^{2} + dy^{2} \right ), \]
where let $(x,y,z)$ be the canonical coordinates. And, we study surfaces in $\mathbb{R}^{0, 2, 1}$.

\subsection{Preparations}
Let $M$ be a two-dimensional manifold, let $f : M \rightarrow \mathbb{R}^{0,2,1}$ be a $C^{\infty}$-immersion and let $g$ be the induced metric by $f$. We assume that the metric $g$ is a positive definite symmetric bilinear form. And, we call $f$ a \textit{non-degenerate immersion} or a \textit{non-degenerate surface}. Then, for each $x \in M$, a \textit{normal vector space} $T^{\bot}_{x}M$ is defined by
\[ T_{x}^{\bot}M := \{ \xi \in \mathbb{R}^{3} \ | \ (df_{x}(v), \xi) = 0 , \ \forall v \in T_{x}M \} = \textrm{span}
_{\mathbb{R}}\{(0,0,1)\}, \]
and we have a vector bundle of rank one over $M$
\[ T^{\bot}M = \bigcup_{x \in M}T_{x}^{\bot}M. \]
Therefore, we obtain an orthogonal direct sum decomposition
\[ T_{f(x)}\mathbb{R}^{3} = T_{x}M \perp T_{x}^{\bot}M \]
for each $x \in M$. In particular, we see, as a vector bundle decomposition,
\[ f^{\ast}T\mathbb{R}^{3} = TM \perp T^{\bot}M, \]
where $TM$ is the tangent bundle over $M$ and $f^{\ast}T\mathbb{R}^{3}$ is the pull-back bundle by $f$ over $M$.

\begin{prop} \rm
We get an isomorphism as vector bundle
\[ T^{\bot}M \cong M \times \mathbb{R}. \]
\end{prop}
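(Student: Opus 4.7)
The key observation is already essentially contained in the computation preceding the proposition: the normal space at every point $x \in M$ is the same one-dimensional subspace of $\mathbb{R}^3$, namely $T_x^{\bot}M = \mathrm{span}_{\mathbb{R}}\{(0,0,1)\}$. So the normal bundle is a rank-one vector bundle with a canonical, globally defined unit section, and triviality should follow immediately.

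My plan is to exhibit an explicit bundle isomorphism. Define $\Phi : M \times \mathbb{R} \to T^{\bot}M$ by
\[
\Phi(x, t) := (x,\, t \cdot (0,0,1)) \in \{x\} \times T_x^{\bot}M.
\]
I would first check that $\Phi$ is well-defined using the computation above showing $T_x^{\bot}M = \mathrm{span}_{\mathbb{R}}\{(0,0,1)\}$ for every $x \in M$. Smoothness is clear since the assignment $x \mapsto (0,0,1)$ is a constant map into $\mathbb{R}^3$, hence a smooth section of $f^{\ast}T\mathbb{R}^3$, whose image lies in $T^{\bot}M$. The fact that $\Phi$ covers the identity on $M$ and is linear on each fiber is immediate from the definition.

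The only thing left is bijectivity on fibers: for fixed $x$, the map $t \mapsto t(0,0,1)$ is a linear isomorphism $\mathbb{R} \to T_x^{\bot}M$ since the target is one-dimensional and spanned by $(0,0,1)$. This makes $\Phi$ a smooth vector bundle isomorphism, giving $T^{\bot}M \cong M \times \mathbb{R}$.

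There is no real obstacle here; the proposition is essentially a reformulation of the fact that the degenerate direction $(0,0,1)$ in $\mathbb{R}^{0,2,1}$ is globally defined, so the normal line bundle of any non-degenerate surface admits a canonical global frame. The content of the statement is simply that $d$-minimal surface theory can always use this global normal vector field without worrying about orientation issues or local trivializations.
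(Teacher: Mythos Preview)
Your proof is correct and takes essentially the same approach as the paper: the paper simply observes that $\xi = (0,0,1)$ is a non-vanishing global section of $T^{\bot}M$, which is exactly the content of your explicit trivialization $\Phi(x,t) = (x,\, t(0,0,1))$. You have just spelled out in more detail the standard fact that a nowhere-vanishing section of a line bundle yields a trivialization.
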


\begin{proof}
We can take $\xi = (0,0,1) \in \Gamma(T^{\bot}M)$ as a non-vanishing global section. 
\end{proof}

\begin{rem} \rm
For three-dimensional singular semi-Euclidean space with the signature $(p, q, r)$, where $p+q+r=3, \ r \geq 1, \ p \leq q$, we can define non-degenerate surfaces when $r=1$, i.e.
\[ (p, q, r) = (0,2,1), \ (1,1,1). \]
When $r \geq 2$, the metric induced on surfaces is degenerate.
We remark that $\mathbb{R}^{1,1,1}$ is equivalent to the pseudo-isotropic $3$-space $\mathbb{I}^{3}_{1}$ (Refer to \cite{Silva1}, \cite{Silva2} and \cite{Ay1}).
And, as a notation, we define 
\[ |v| := \sqrt{(v,v)} = \sqrt{v_{1}^{2} + v_{2}^{2}} \]
for a vector $v = (v_{1}, v_{2}, v_{3}) \in \mathbb{R}^{0,2,1}$.
\end{rem}

Next, we recall affine differential geometry (\cite{NS}). 
Let $(\mathbb{R}^{n+1}, d)$ be $(n+1)$-dimensional Euclidean space with the canonical connection $d$ and $M$ be an $n$-dimensional manifold. A $C^{\infty}$-immersion $f : M \rightarrow \mathbb{R}^{n+1}$ is an \textit{affine immersion} if for any $x \in M$ there exists a neighborhood $U$ at $x$ and a vector field $\xi$ on $U$ over $\mathbb{R}^{n+1}$ such that 
\[ T_{f(y)}\mathbb{R}^{n+1} = T_{y}M \oplus \mathbb{R}\xi_{y} \quad (\forall y \in U), \]
where $\oplus$ stands for the direct sum. 
In particular, when there exists $\xi$ globally on $M$, it is called a \textit{transversally vector field} on $M$. Then, a torsion-free connection $\nabla$ is induced on $M$, and it satisfies 
\[ d_{X}Y = \nabla_{X}Y + h(X,Y)\xi \]
for any $X, Y \in \Gamma(TM)$.
This implies that $h$ is a $(0, 2)$-type symmetric tensor field over $M$, and we call $h$ an \textit{affine fundamental form} (with respect to $\xi$). In affine differential geometry, we often assume that $h$ is non-degenerate. Moreover, let $f : M \rightarrow \mathbb{R}^{n+1}$ be an affine immersion and let $\xi$ be its transversally vector field. we call $\xi$ \textit{equiaffine} when 
\[ \forall X \in \Gamma(TM), \quad d_{X}\xi \in \Gamma(TM). \]
Then, $f$ is called an \textit{equiaffine immersion}.

In terms of affine differential geometry, we see the following proposition.

\begin{prop} \rm
Let $M$ be a two-dimensional manifold. 
A non-degenerate immersion $f : M \rightarrow \mathbb{R}^{0,2,1}$ is an equiaffine immersion whose transversally vector field over $M$ is $\xi \equiv (0, 0, 1)$.
\end{prop}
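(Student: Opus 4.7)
The plan is to verify the two defining conditions in order: first that $\xi \equiv (0,0,1)$ is a transversal (equivalently, transversally) vector field over $M$, and second that it is equiaffine, i.e.\ $d_{X}\xi \in \Gamma(TM)$ for every $X \in \Gamma(TM)$. Since $\xi$ is a constant non-vanishing vector field on $\mathbb{R}^{3}$, both verifications can be carried out pointwise, and only the first one requires any real argument.

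For transversality, I would fix $y \in M$ and show directly that $T_{f(y)}\mathbb{R}^{3} = T_{y}M \oplus \mathbb{R}\xi_{y}$. Dimensions already add up, so it suffices to prove $\xi_{y} \notin df_{y}(T_{y}M)$. Here I would use the standing assumption that the induced metric $g = f^{\ast}(dx^{2}+dy^{2})$ is positive definite: if $\pi : \mathbb{R}^{3} \to \mathbb{R}^{2}$ denotes the projection $(x,y,z) \mapsto (x,y)$, then $g(v,w) = \langle \pi \circ df_{y}(v), \pi \circ df_{y}(w) \rangle_{\mathrm{Eucl}}$ for tangent vectors $v, w$. Non-degeneracy of $g$ forces $\pi \circ df_{y}$ to be injective on the $2$-dimensional space $T_{y}M$, hence an isomorphism onto its image in $\mathbb{R}^{2}$. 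Since $\pi(\xi_{y}) = (0,0)$, the vector $\xi_{y}$ cannot lie in $df_{y}(T_{y}M)$, and transversality follows.

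For the equiaffine condition, I would simply observe that $\xi \equiv (0,0,1)$ is a \emph{parallel} vector field on $\mathbb{R}^{n+1}$ with respect to the canonical flat connection $d$, i.e.\ $d_{X}\xi = 0$ for every $X \in \Gamma(TM)$. Since $0 \in \Gamma(TM)$, the equiaffine condition holds trivially, and $f$ is an equiaffine immersion.

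The only nontrivial step is showing $\xi_{y} \notin T_{y}M$, but this is the same underlying reason that the normal space was computed to be $\mathrm{span}_{\mathbb{R}}\{(0,0,1)\}$ in the preceding discussion, so there is no genuine obstacle; the proof is essentially a repackaging of the non-degeneracy hypothesis plus the flatness of $\xi$.
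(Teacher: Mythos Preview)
Your proof is correct and follows essentially the same route as the paper. The paper's argument is just the compressed version: it cites the orthogonal direct sum $f^{\ast}T\mathbb{R}^{3} = TM \perp T^{\bot}M$ already established (with $T^{\bot}M = \mathrm{span}\{(0,0,1)\}$) to get transversality, and then observes $d_{X}\xi = 0$ for the equiaffine condition---exactly the two ingredients you identify, and you yourself note that your transversality argument is a re-derivation of that normal-space computation.
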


\begin{proof}
By using the orthogonal direct sum $f^{\ast}T\mathbb{R}^{3} = TM \perp T^{\bot}M$, and $d_{X}\xi = 0$ for all $X \in \Gamma(TM)$, the proof is completed.
\end{proof}

Hereinafter, let $\xi$ be the constant vector field $\xi = (0, 0, 1)$ and let $d$ be the canonical connection as a linear connection, i.e. for all $X, Y \in \Gamma(T\mathbb{R}^{0,2,1})$, identifying $Y$ with the vector-valued function $Y=(Y_{1}, Y_{2}, Y_{3})$,
\[ d_{X}Y := dX(Y) = (X(Y_{1}), X(Y_{2}), X(Y_{3})). \]
Then, the connection $d$ is torsion-free and preserves the degenerate metric $( \cdotp, \cdotp )$. Thus, the connection $d$ plays the role of the Levi-Civita connection.

We define the automorphism group $\textrm{Aut}(\mathbb{R}^{0, 2, 1}, d)$ with respect to $\mathbb{R}^{0, 2, 1}$ and $d$ as 
\begin{eqnarray*}
\textrm{Aut}(\mathbb{R}^{0, 2, 1}, d) &:=& 
\{ A \in \textrm{Diff}(\mathbb{R}^{3}) \ | \ A^{\ast}d = d, \ A^{\ast}(\cdot, \cdot) = (\cdot, \cdot) \} \\
&=& O(0, 2, 1) \ltimes \mathbb{R}^3,
\end{eqnarray*}
where $\textrm{Diff}(\mathbb{R}^{3})$ is the diffeomorphism group of $\mathbb{R}^{3}$ and
\[ O(0, 2, 1):= \left. \left \{
    \left(
    \begin{array}{ccc}
      \multicolumn{2}{c}{\raisebox{-2.0ex}{\LARGE{\textit{T}}}} & 0 \\
       & &0 \\
      a & b& c
    \end{array}
  \right)
 \ \right | \ a, b, c \in \mathbb{R}, \ c \neq 0, \ T \in O(2) \right \}. \]
We call $\textrm{Aut}(\mathbb{R}^{0, 2, 1}, d)$ an \textit{affine isometry group}. In particular, $\textrm{Aut}(\mathbb{R}^{0, 2, 1}, d) $ is a seven-dimensional Lie group. From the view of Cayley-Klein geometry, this automorphism group is nothing but the simply isotropic rigid motion group (\cite{Silva1}). 

By using the decomposition $f^{\ast}T\mathbb{R}^{3} = TM \bot T^{\bot}M$, for each $X, Y \in \Gamma(TM)$, $\alpha \xi \in \Gamma(T^{\bot}M) \ (\alpha \in C^{\infty}(M))$, we have
\begin{eqnarray*}
d_{X}Y &=& \nabla_{X}Y + h(X,Y)\xi, \\
d_{X}(\alpha \xi) &=& X(\alpha)\xi.
\end{eqnarray*}
Then, we see that the connection $\nabla$ is the Levi-Civita connection with respect to the induced metric $g$ on $M$. And, we call the given affine fundamental form $h$ a \textit{second fundamental form} of the non-degenerate immersion $f$.

For all $X, Y, Z \in \Gamma(TM)$, since the connection $d$ is flat, we obtain
\[ 0 = {}^{d}R(X,Y)Z = {}^{\nabla}R(X,Y)Z + \{ (\nabla_{X}h)(Y,Z) - (\nabla_{Y}h)(X,Z) \}\xi, \]
where ${}^{d}R$ and ${}^{\nabla}R$ are the curvature tensor fields for $d$ and $\nabla$ respectively, and we define $(\nabla_{X}h)(Y,Z):=X(h(Y,Z)) - h(\nabla_{X}Y,Z) - h(Y, \nabla_{X}Z)$. Therefore, we get
\begin{eqnarray}
{}^{\nabla}R &\equiv& 0, \label{flat} \\
(\nabla_{X}h)(Y,Z) &=& (\nabla_{Y}h)(X,Z). \label{gauss-codazzi}
\end{eqnarray}
The formula (\ref{flat}) implies that the non-degenerate surface is always flat, and we call the formula (\ref{gauss-codazzi}) \textit{Gauss-Codazzi equation} of the non-degenerate surface. These formulas (\ref{flat}) and (\ref{gauss-codazzi}) were obtained by Sachs in \cite{Sach}. 

Let $f : M \rightarrow \mathbb{R}^{0, 2, 1}$ be a non-degenerate immersion. Then, the image of $f$ is locally expressed by the form of a graph surface $\{(u, v, F(u, v)) \in \mathbb{R}^{0, 2, 1} \ | \ (u, v) \in U \}$, where $F$ is a smooth function on an open subset $U \subset \mathbb{R}^2$.

Next, we define some classes for non-degenerate surfaces.
\begin{itemize}
\item[(i)] \textit{$d$-totally geodesic surface} :$\Leftrightarrow$ {the second fundamental form}$ \ h \equiv 0$,
\item[(ii)] \textit{$d$-totally umbilical surface} :$\Leftrightarrow$ $\exists \lambda \in C^{\infty}(M)$ s.t. $h = \lambda g$,
\item[(iii)] \textit{$d$-minimal surface} :$\Leftrightarrow$ $\mathcal{H}:=\dfrac{1}{2} \textrm{tr}_{g}h= \dfrac{1}{2} g^{ij}h_{ij} = 0$,
\end{itemize}
where $g^{ij}$ is the components of the inverse matrix of $(g_{ij})_{1 \leq i,j \leq 2}$ and $h_{ij}$ is the coefficients of the second fundamental form $h$. We call $\mathcal{H}$ the \textit{mean curvature} of the non-degenerate surface.
For (ii), we remark that (ii) is equivalent to (i) when $\lambda=0$.

\begin{prop} \rm
Let $M$ be a two-dimensional manifold, and 
let $f : M \rightarrow \mathbb{R}^{0,2,1}$ be connected, not $d$-totally geodesic and $d$-totally umbilical surface, that is, there exists a function $\lambda \in C^{\infty}(M)$ such that $h=\lambda g$ and $\lambda \neq 0$. Then, $\lambda$ is a constant function, and the image of $f$ is an open subset of a paraboloid of revolution
\[ \left. \left \{\left (u,v,\frac{\lambda}{2}(u^{2}+v^{2})+Au+Bv+C \right ) \in \mathbb{R}^{3} \ \right | \ (u,v) \in \mathbb{R}^{2} \right \}, \]
where $A, B, C \in \mathbb{R}$ are constant. In particular, it is, up to affine isometry, an open subset of 
\[ \left. \left \{(u, v, u^{2}+v^{2}) \in \mathbb{R}^{3} \ \right | \ (u,v) \in \mathbb{R}^{2} \right \}. \]
\end{prop}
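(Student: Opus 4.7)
The plan is to first use the Gauss-Codazzi equation (\ref{gauss-codazzi}) to force $\lambda$ to be constant on the connected surface $M$, then to integrate the resulting linear PDE in a graph representation, and finally to normalize via the affine isometry group $\mathrm{Aut}(\mathbb{R}^{0,2,1}, d)$. Since the induced connection $\nabla$ is the Levi-Civita connection of $g$, we have $\nabla g = 0$, so substituting $h = \lambda g$ into the Codazzi identity gives
\[ X(\lambda)\, g(Y, Z) = Y(\lambda)\, g(X, Z) \]
for all $X, Y, Z \in \Gamma(TM)$. Taking $\{e_{1}, e_{2}\}$ a local $g$-orthonormal frame and specializing first to $(X, Y, Z) = (e_{1}, e_{2}, e_{1})$ and then to $(e_{2}, e_{1}, e_{2})$ immediately forces $e_{1}(\lambda) = e_{2}(\lambda) = 0$, so $d\lambda \equiv 0$; connectivity of $M$ then gives that $\lambda$ is a (nonzero) constant.

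Next, by the graph representation recalled earlier in the excerpt, $f(u, v) = (u, v, F(u, v))$ locally. Because the degenerate ambient metric ignores the $z$-coordinate, the induced metric has $g_{ij} = \delta_{ij}$, while the second derivatives of $f$ lie entirely along $\xi = (0, 0, 1)$, so the coefficients of $h$ are $h_{ij} = F_{ij}$. The umbilical condition $h = \lambda g$ with $\lambda$ constant thus reduces to the linear system $F_{uu} = F_{vv} = \lambda$, $F_{uv} = 0$, whose general solution is
\[ F(u, v) = \tfrac{\lambda}{2}(u^{2} + v^{2}) + Au + Bv + C \]
for some $A, B, C \in \mathbb{R}$. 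Since $(u, v)$ can be taken as global coordinates on the image and the solution depends on finitely many constants, the local expression propagates to the whole connected surface.

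For the last step, translations in $\mathbb{R}^{3}$ absorb $A, B, C$ after completing the square in the above formula, and the diagonal matrix $\mathrm{diag}(1, 1, 2/\lambda)$, which lies in $O(0, 2, 1)$ because its $(3, 3)$-entry is nonzero, rescales the graph to $F = u^{2} + v^{2}$. The main obstacle is the first step: one must verify that the two-dimensionality of $M$ makes the Codazzi identity rigid enough to pin down $d\lambda \equiv 0$ rather than collapsing into a tautology. Once $\lambda$ is known to be constant, the remaining integration and the normalization via $\mathrm{Aut}(\mathbb{R}^{0,2,1}, d)$ are essentially bookkeeping.
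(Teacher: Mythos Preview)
Your proof is correct and follows essentially the same route as the paper's: derive the constancy of $\lambda$ from the Gauss--Codazzi equation, pass to a local graph $f(u,v)=(u,v,F(u,v))$ so that $g_{ij}=\delta_{ij}$ and $h_{ij}=F_{ij}$, integrate the resulting linear system, and then normalize. You merely spell out in more detail the two steps the paper leaves terse (the substitution $h=\lambda g$ into Codazzi and the explicit affine isometry $\mathrm{diag}(1,1,2/\lambda)$ composed with a translation), but the argument is the same.
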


\begin{proof}
Since non-degenerate surfaces satisfy Gauss-codazzi equation (\ref{gauss-codazzi}), the function $\lambda$ is a constant. 
Let $g$ be the induced metric by $f$ and let $h$ be its second fundamental form. From the assumption, there exists a non-zero constant number $\lambda \in \mathbb{R}$ such that $h = \lambda g$. Since $f$ is the non-degenerate immersion, for each point of $M$, there exists a coordinate neighborhood $\{U ; (u, v) \}$ such that 
\[  f(u, v) = (u, v, \varphi(u, v) ) \in \mathbb{R}^{0,2,1}, \]
where $\varphi$ is a $C^{\infty}$-function on $U$. Then, we get
\[ h_{11} = \varphi_{uu}, \ h_{12} = \varphi_{uv}, \ h_{22} = \varphi_{vv}. \]
Therefore, since we have
\[ \varphi_{uu} = \lambda g_{11} = \lambda, \ \varphi_{uv} = \lambda g_{12} = 0, \ \varphi_{vv} = \lambda g_{22} = \lambda, \]
there exist constant numbers $A, B, C \in \mathbb{R}$ such that 
\[ \varphi(u, v) = \frac{\lambda}{2}(u^{2} + v^{2}) + Au + Bv + C. \]
Finally, gluing these pieces of surface in the whole of $M$, we obtain the consequence.
\end{proof}

Here, we define a \textit{relative Gaussian curvature} $\mathcal{K}$ which is introduced in \cite{Sach} as
\[ \mathcal{K} := \frac{\det{h}}{\det{g}} \in C^{\infty}(M). \]
This quantity expresses the shape of the non-degenerate surface when we look from the ambient space $\mathbb{R}^{3}$. However, the canonical Gaussian curvature, i.e. the sectional curvature of two-dimensional Riemannian manifolds with respect to the induced metric, identically vanishes.

\begin{prop} [\cite{Sach}, Definition 8.11] \rm
Let $M$ be a two-dimensional manifold, and 
let $f : M \rightarrow \mathbb{R}^{0,2,1}$ be a non-degenerate immersion. Let $\mathcal{K}$ be its relative Gaussian curvature. Then, in a sense of surface theory in the canonical Euclidean space $\mathbb{R}^{3}$, we have
\begin{eqnarray*}
\mathcal{K}(x) > 0 &\Longleftrightarrow& x : \textrm{elliptic point}, \\
\mathcal{K}(x) < 0 &\Longleftrightarrow& x : \textrm{hyperbolic point}, \\
\mathcal{K}(x) = 0 &\Longleftrightarrow& x : \textrm{parabolic point},
\end{eqnarray*}
for each $x \in M$. However, when we consider $f$ as an immersion to Euclidean space $\mathbb{R}^{3}$, the canonical Gaussian curvature do not correspond with the relative Gaussian curvature in general.
\end{prop}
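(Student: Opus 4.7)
The plan is to reduce everything to a local graph representation, exactly as in the proof of the preceding proposition. Around any $x \in M$, since $f$ is non-degenerate, I can take a coordinate neighborhood $\{U;(u,v)\}$ in which
\[
f(u,v) = (u,v,\varphi(u,v)) \in \mathbb{R}^{0,2,1}
\]
for some smooth $\varphi$. The degenerate metric $(\cdot,\cdot) = du^{2}+dv^{2}$ pulls back to $g_{11}=g_{22}=1$, $g_{12}=0$, so $\det g \equiv 1$. Applying the formula $d_{X}Y = \nabla_{X}Y + h(X,Y)\xi$ with $\xi=(0,0,1)$ to the coordinate fields $f_{u}, f_{v}$ gives $h_{11}=\varphi_{uu}$, $h_{12}=\varphi_{uv}$, $h_{22}=\varphi_{vv}$, and hence
\[
\mathcal{K}(x) \;=\; \frac{\det h}{\det g}(x) \;=\; \varphi_{uu}(x)\,\varphi_{vv}(x)-\varphi_{uv}(x)^{2}.
\]

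The second step is to compare this with the Euclidean situation. Viewing the same graph as a surface in canonical Euclidean $\mathbb{R}^{3}$, its Gaussian curvature is
\[
K_{E} \;=\; \frac{\varphi_{uu}\varphi_{vv}-\varphi_{uv}^{2}}{(1+\varphi_{u}^{2}+\varphi_{v}^{2})^{2}},
\]
whose denominator is strictly positive. By the classical definition, a point of a Euclidean surface is \emph{elliptic}, \emph{hyperbolic}, or \emph{parabolic} according as $K_{E}>0$, $<0$, or $=0$. Since $\mathcal{K}(x)$ and $K_{E}(x)$ have the same numerator and both have positive denominators, they have the same sign at every point, which yields the three stated equivalences simultaneously.

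The only remaining point is that both the classification of points as elliptic/hyperbolic/parabolic in Euclidean surface theory and the quantity $\mathcal{K}=\det h/\det g$ are well-defined independently of the local graph representation — the former because it is intrinsic to the Euclidean surface, the latter because a change of parameters multiplies $\det h$ and $\det g$ by the same factor $(\det J)^{2}$ — so the local argument on graphs suffices. I do not expect a serious obstacle: the proposition is essentially the observation that the numerator of the Euclidean Gaussian curvature of a graph and the relative Gaussian curvature $\mathcal{K}$ of that graph in $\mathbb{R}^{0,2,1}$ coincide, because in the $\mathbb{R}^{0,2,1}$ setting the first fundamental form degenerates to the identity and the nonlinear terms $\varphi_{u},\varphi_{v}$ never enter.
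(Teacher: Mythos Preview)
Your argument is correct. Note, however, that the paper does not actually supply a proof of this proposition: it is stated with a citation to Sachs (\cite{Sach}, Definition~8.11) and followed immediately by a remark, with no \texttt{proof} environment. So there is no ``paper's own proof'' to compare against.

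That said, your direct computation via the local graph representation $f(u,v)=(u,v,\varphi(u,v))$ is the natural way to see it, and it is fully in the spirit of the surrounding arguments in the paper (the same graph chart is used in the proof of the preceding proposition on $d$-totally umbilical surfaces). Both $\mathcal{K}$ and the Euclidean Gaussian curvature $K_{E}$ have the Hessian determinant $\varphi_{uu}\varphi_{vv}-\varphi_{uv}^{2}$ as numerator, with strictly positive denominators, so their signs agree pointwise. Your remark on coordinate independence is also apt. Nothing is missing.
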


\begin{rem} \rm
We consider the sign of the relative Gaussian curvature for some surfaces.
First, for $d$-totally geodesic surfaces, since we have $h = 0$ by definition, it holds
\[ \mathcal{K} =  \frac{\det{h}}{\det{g}} \equiv 0. \]
Next, for $d$-totally umbilical surfaces, we have, by definition, there exists a constant number $\lambda \in \mathbb{R}$ such that $h = \lambda g$. We assume $\lambda \neq 0$. Then, we obtain 
\[ \mathcal{K} = \frac{\det{h}}{\det{g}} = \frac{\lambda^{2} \det{g}}{\det{g}} = \lambda^{2} > 0, \]
that is, all points are elliptic.
Finally, for $d$-minimal surfaces, we make use of isothermal coordinates, that is, we choose the coordinates in which the coefficients of the induced metric hold
\[ g_{11} = g_{22} > 0, \quad g_{12} = 0. \]
Then, since the mean curvature identically vanishes, we have
\[ 2\mathcal{H} = \textrm{tr}_{g}h = \frac{g_{22}h_{11} + g_{11}h_{22}}{g_{11}g_{22}} = \frac{h_{11} + h_{22}}{g_{11}} \equiv 0. \]
Moreover, by using $h_{22} = -h_{11}$, we obtain
\[ \mathcal{K} = \frac{\det{h}}{\det{g}} = \frac{h_{11}h_{22} - h_{12}^{2}}{g_{11}g_{22}} = -\frac{h_{11}^{2} + h_{12}^{2}}{g_{11}^{2}} \leq 0, \]
that is, almost all points are hyperbolic.
\end{rem}

Here, we give some descriptions for curves in $\mathbb{R}^{0, 2, 1}$. 
For a connected open interval $I \subset \mathbb{R}$, let $c$ be a $C^{\infty}$-map $c : I \rightarrow \mathbb{R}^{0, 2, 1}$. We call $c$ a \textit{curve} in $\mathbb{R}^{0, 2, 1}$. Moreover, we call $c$ a \textit{regular} curve if it holds
\[ \forall t \in I, \ c'(t) \neq 0. \]
Next, let $\pi$ be the projection to $xy$-plane, i.e.
\[ \pi : \mathbb{R}^{0, 2, 1} \ni (x, y, z) \mapsto (x, y) \in \mathbb{R}^{2}. \]
And, we call a parameter $s$ of a curve $c = c(s)$ \textit{arc-length} if it holds
\[ |c'(s)| \equiv 1. \]
Then, we obtain the following propositions.

\begin{prop} \label{arc_length} \rm
Let $c = c(t) \ (t \in I)$ be a regular curve in $\mathbb{R}^{0, 2, 1}$. The following are equivalent:
\begin{itemize}
\item[(i)] The curve $c = c(t)$ admits an arc-length parameter.
\item[(ii)] For all $t \in I$, it holds $|c'(t)| > 0$.
\item[(iii)] The mapping $\pi \circ c$ is regular as a planar curve in $\mathbb{R}^{2}$.
\end{itemize}
\end{prop}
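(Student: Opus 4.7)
The plan is to split the chain of equivalences into two easy parts: $(ii) \Leftrightarrow (iii)$ is immediate from the definition of $|\cdot|$, and $(i) \Leftrightarrow (ii)$ is the classical arc-length reparametrization argument adapted to the degenerate pseudo-norm on $\mathbb{R}^{0,2,1}$.

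For $(ii) \Leftrightarrow (iii)$ I would write $c(t) = (c_{1}(t), c_{2}(t), c_{3}(t))$ and observe that by definition
\[ |c'(t)| = \sqrt{c_{1}'(t)^{2} + c_{2}'(t)^{2}}, \]
while $(\pi \circ c)'(t) = (c_{1}'(t), c_{2}'(t))$. Both conditions are therefore equivalent to the pointwise nonvanishing of $(c_{1}'(t), c_{2}'(t))$, so the equivalence is instantaneous.

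For $(ii) \Rightarrow (i)$ I would fix $t_{0} \in I$, define $s(t) := \int_{t_{0}}^{t} |c'(\tau)|\, d\tau$, and note that $(ii)$ gives $s'(t) = |c'(t)| > 0$, so $s : I \to J$ is a smooth diffeomorphism onto an open interval $J$. Writing $t = t(s)$ for the inverse and $\tilde{c}(s) := c(t(s))$, the chain rule together with the positive homogeneity $|\alpha v| = |\alpha|\,|v|$ (which holds because the third component of $v$ does not enter $|v|$) yields $|\tilde{c}'(s)| = |c'(t(s))|\cdot|t'(s)| = 1$. Conversely, for $(i) \Rightarrow (ii)$, if some diffeomorphism $\phi : I \to J$ produces $\tilde{c} = c \circ \phi^{-1}$ with $|\tilde{c}'(s)| \equiv 1$, then by the chain rule and the same homogeneity, $|c'(t)| = |\phi'(t)|\cdot|\tilde{c}'(\phi(t))| = |\phi'(t)| > 0$, since $\phi$ is a diffeomorphism.

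There is no genuine obstacle here; the only subtlety worth flagging is that $|\cdot|$ is merely a seminorm in this degenerate setting (it vanishes on the $z$-axis). This is precisely the reason that regularity of $c$ by itself is not enough to guarantee $(i)$ — a vector of the form $(0,0,c_{3}'(t))$ is nonzero but has $|c'(t)| = 0$ — so condition $(ii)$ is the correct strengthening, and once it is assumed the classical proof goes through verbatim.
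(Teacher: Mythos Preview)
Your proof is correct; the paper itself just writes ``Easy calculations'' and gives no details, so your argument is exactly the standard fleshing-out one would expect. The only remark is that your observation about $|\cdot|$ being merely a seminorm (so that regularity of $c$ alone does not force $(ii)$) is precisely the point of the proposition, and you have identified it clearly.
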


\begin{proof}
Easy calculations. 
\end{proof}

We call a regular curve $c = c(t) \ (t \in I)$ in  $\mathbb{R}^{0, 2, 1}$ \textit{null} if it holds
\[ |c'(t)| \equiv 0. \]

\begin{prop} \label{null_curve} \rm
A regular curve $c : I \rightarrow \mathbb{R}^{0,2,1}$ is null if and only if it is a spacial line which is parallel with the $z$-axis.
\end{prop}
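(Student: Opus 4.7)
The plan is to argue purely in coordinates, using the explicit form of the degenerate metric. Write $c(t) = (x(t), y(t), z(t))$, so that
\[ |c'(t)|^{2} = (c'(t), c'(t)) = x'(t)^{2} + y'(t)^{2}. \]
This is the key identity: the metric on $\mathbb{R}^{0,2,1}$ only sees the first two coordinates.

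For the ``only if'' direction, I would assume $|c'(t)| \equiv 0$. Since $x'(t)^{2} + y'(t)^{2} = 0$ is a sum of squares of real numbers, this forces $x'(t) \equiv 0$ and $y'(t) \equiv 0$ on $I$, so $x(t) \equiv x_{0}$ and $y(t) \equiv y_{0}$ for some constants $x_{0}, y_{0} \in \mathbb{R}$. Now I use regularity: $c'(t) = (0, 0, z'(t)) \neq 0$ for every $t \in I$, so $z'(t) \neq 0$ throughout. Hence $z$ is strictly monotonic, and the image $c(I)$ is an open subset of the vertical line $\{(x_{0}, y_{0})\} \times \mathbb{R}$, which is parallel to the $z$-axis. (Up to an orientation-preserving reparametrization by $s = z(t)$, the curve itself is literally $s \mapsto (x_{0}, y_{0}, s)$, which justifies calling it a spatial line parallel to the $z$-axis.)

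For the ``if'' direction, any regular curve whose image lies in a line parallel to the $z$-axis has the form $c(t) = (x_{0}, y_{0}, z(t))$ with $z'(t) \neq 0$, giving $c'(t) = (0, 0, z'(t))$ and hence $|c'(t)|^{2} = 0^{2} + 0^{2} = 0$, so $c$ is null.

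There is no real obstacle here; the statement is essentially a direct reading of the degenerate metric. The only mildly delicate point is interpretational — making precise that ``the curve is a spatial line'' means its image is (an open subset of) such a line, since the parameter $z(t)$ need not be affine in $t$; this is handled by the regularity assumption together with Proposition \ref{arc_length}, which shows that $c$ fails condition (ii) there precisely when its planar projection $\pi \circ c$ is constant.
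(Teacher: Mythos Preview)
Your proof is correct and is precisely the direct coordinate computation that the paper leaves implicit; the paper's own proof reads simply ``Easy calculations.'' Your interpretational remark about the image being an open subset of a vertical line (rather than requiring an affine parametrization) is a reasonable clarification but not something the paper addresses.
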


\begin{proof}
Easy calculations. 
\end{proof}

\begin{prop} \label{kindofsurf} \rm 
For any connected surfaces in $\mathbb{R}^{0, 2, 1}$,
\begin{itemize}
\item[(0)] $d$-totally geodesic surfaces in $\mathbb{R}^{0,2,1}$ are non-degenerate planes only (\cite{Sach}, Theorem 9.4).
\item[(1)] a graph surface in $\mathbb{R}^{0,2,1}$
\[ \{ (u, v, f(u,v)) \in \mathbb{R}^{0,2,1} \ | \ (u,v) \in U \subset \mathbb{R}^2 \} \]
is $d$-minimal if and only if $f$ is a harmonic function on $U$ (\cite{Sach}, Eq. (9.31)).
\item[(2)] non-planar, ruled $d$-minimal surfaces in $\mathbb{R}^{0,2,1}$ are locally, up to affine isometry, open subset of 
 \begin{itemize}
 \item[(a)] $f(u,v) = (v\cos{u}, v\sin{u}, u)$ (refer to Figure \ref{fig:9}),
 \item[(b)] $f(u,v) = (u, v, uv)$ (refer to Figure \ref{fig:10}),
 \end{itemize}
where $(u, v) \in \mathbb{R}^{2}$ (\cite{Sato}, Theorem 6).
\item[(3)] non-planar, $d$-minimal rotational surfaces in $\mathbb{R}^{0,2,1}$ are locally, up to affine isometry, open subset of 
\[ f(u, v) = (e^{u}\cos{v}, e^{u}\sin{v}, u) \]
(refer to Figure \ref{fig:11}), where the rotational surfaces mean the rotation group, which acts on the $xy$-plane, $SO(2)$-invariant surfaces.
\end{itemize}
\end{prop}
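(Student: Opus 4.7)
The plan is to treat the four parts in order, leveraging the graph representation of non-degenerate surfaces and the computation of the second fundamental form in canonical coordinates.

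For part (0), I would pick any point and take a local graph presentation $f(u,v)=(u,v,F(u,v))$ guaranteed by non-degeneracy. The induced metric is the standard Euclidean one ($g_{11}=g_{22}=1$, $g_{12}=0$) and a direct computation gives $h_{11}=F_{uu}$, $h_{12}=F_{uv}$, $h_{22}=F_{vv}$. The assumption $h\equiv 0$ forces $F$ to be affine, so the image is a plane with non-degenerate induced metric (since $\xi=(0,0,1)$ is transverse to it); a connectedness argument glues the local pieces together.

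For part (1), the same calculation in the previous paragraph makes the computation immediate: in a graph parametrization, the inverse matrix $g^{ij}$ is the identity, so
\begin{equation*}
2\mathcal{H}=g^{ij}h_{ij}=h_{11}+h_{22}=F_{uu}+F_{vv}=\Delta F,
\end{equation*}
and hence $\mathcal{H}\equiv 0$ is equivalent to $F$ being harmonic.

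For part (2), this is precisely the classification theorem in the author's earlier work \cite{Sato}, so I would simply invoke it, noting that the case split there is between the ruling being a null direction (parallel to the $z$-axis, by Proposition \ref{null_curve}) or a spacelike direction, and these yield respectively the two surfaces (a) and (b) after normalizing by elements of the affine isometry group $\mathrm{Aut}(\mathbb{R}^{0,2,1},d)$.

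For part (3), I would reduce to the setting of part (1). After applying an affine isometry, the $SO(2)$ action is rotation about the $z$-axis, and in polar coordinates $(r,\theta)$ on the $xy$-plane any rotationally invariant non-degenerate surface is the graph $z=F(r)$ of a smooth function of $r$ alone. The $d$-minimality condition becomes the ODE
\begin{equation*}
F''(r)+\frac{F'(r)}{r}=0,
\end{equation*}
whose general solution is $F(r)=A\log r+B$. Non-planarity rules out $A=0$, and a further affine isometry (scaling in $z$, translation) normalizes to $F(r)=\log r$. Parametrizing by $u=\log r$, $v=\theta$ yields the stated form $(e^u\cos v,e^u\sin v,u)$. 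The main subtlety, and the step I would write out carefully, is verifying that no additional rotational invariant $d$-minimal examples arise by allowing the rotation axis to be non-principal or by permitting $F$ to be multi-valued near $r=0$; both are eliminated by the classification of $SO(2)$-actions preserving the degenerate metric together with the isolated-singularity analysis that will appear in the Weierstrass representation in the next subsection.
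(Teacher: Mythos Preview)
Your treatment of parts (0), (1), and (3) is correct and essentially matches the paper's approach; for (3) the paper parametrizes the profile curve as $(x(u)\cos v,\,x(u)\sin v,\,y(u))$ rather than writing $z=F(r)$ as a graph, but this leads to the same ODE $d^2y/dx^2=-(1/x)\,dy/dx$ and the same solution $y=C_1\log x+C_2$.

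For part (2), however, your description of the case split is wrong. The rulings of a non-degenerate ruled surface in $\mathbb{R}^{0,2,1}$ can never be null: if the direction vector $\gamma$ were parallel to the $z$-axis then $|f_t|^2=|\gamma|^2=0$ and the induced metric would be degenerate. In both (a) and (b) the ruling directions are in fact spacelike. The actual dichotomy (following the method of \cite{Sato}) is on the causal character of $\gamma'$, the \emph{derivative} of the direction curve: setting $\eta:=|\gamma'|^2$, the case $\eta\equiv 1$ yields the helicoid (a) and $\eta\equiv 0$ yields the hyperbolic paraboloid (b). So you have both misidentified the quantity on which the split occurs and reversed the resulting correspondence. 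Note also that the paper does not simply invoke the cited theorem: it adapts the argument to the degenerate-metric setting, first disposing of the cylindrical case (which forces a plane here) and then working through the relevant cases of the classification table in \cite{Sato}.

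Finally, the extra concerns you raise at the end of (3)---non-principal rotation axes, multi-valuedness, and the forward reference to the Weierstrass representation---are unnecessary. The paper's definition of rotational surface fixes the $SO(2)$-action on the $xy$-plane from the outset, so there is nothing further to exclude, and no appeal to later material is needed.
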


\begin{proof}
(0) and (1) are proved by easy calculations. 

In case of $(2)$, we apply the method of classification described by \cite{Sato}.
Since we have the fact that the induced metrics of non-degenerate immersions are positive definite, ruled $d$-minimal surfaces of cylinder type are planes only. Thus, we have only to investigate the case of non-cylinder type. Let curves $\gamma(s)$ and $x(s)$ be a direction curve and a base curve of the given ruled surface respectively. Since we consider the case of non-cylinder, the direction curve $\gamma$ is regular. When $|\gamma'| \neq 0$, we can take the arc-length parameter of $\gamma$ from Proposition \ref{arc_length}. Then, we may set $\eta := |\gamma'|^{2} = (\gamma', \gamma') \equiv 1$ or $0$ in generic.

When $\eta \equiv 1$, we see that the direction curve is $\gamma(s) = \cos{s}e_{1} + \sin{s}e_{2}$, where vectors $e_{1}, e_{2} \in \mathbb{R}^{0, 2, 1}$ satisfy $|e_{1}| = |e_{2}| = 1, \ (e_{1}, e_{2}) = 0$. Therefore, it holds that $e_{1}, e_{2}$ are tangent vectors. If we assume that $(\gamma'(s), x'(s)) \equiv 0$, it holds $(x'(s), x'(s)) \equiv 0$. Thus, regarding the Table $2$ in \cite{Sato}, the case of (i) does not exist. Since the case of (iii) is reduced to the case of (ii), we have only to consider the case of (ii). For the case of (ii), by an affine isometry, we have
\[ f(s, t) = (t\cos{s}, t\sin{s}, s). \]

Next, when $\eta \equiv 0$, we see that $\gamma'$ is a normal vector. Thus, it holds $(\gamma'(s), x'(s)) \equiv 0$. Therefore, we have only to consider the case of (v). Then, by an affine isometry, we have
\[ f(s, t) = (s, t, st). \]

In case of $(3)$, we explain the meaning of $SO(2)$-invariant firstly. It is well-known that
\[ SO(2) = 
\left \{ \left (
 \begin{array}{cc}
   \cos{\theta} & -\sin{\theta} \\
   \sin{\theta} & \cos{\theta}
 \end{array}
\right ) \in M_{2}(\mathbb{R}) \ 
\middle | \ \theta \in \mathbb{R}
\right \}.
\]
We realize $SO(2)$ as a subgroup of $\textrm{Aut}(\mathbb{R}^{0,2,1}, d)$ as below.
\[ H := 
\left \{ \left ( 
  \begin{array}{ccc}
   \cos{\theta} & -\sin{\theta} & 0 \\
   \sin{\theta} & \cos{\theta} & 0 \\
    0 & 0 & 1
  \end{array}
\right ) \in \textrm{Aut}(\mathbb{R}^{0,2,1}, d) \
\middle | \ \theta \in \mathbb{R}
\right \}.
\]
Then, the group $H$ is isomorphic to $SO(2)$ as a Lie group. We simply denote $H$ as $SO(2)$. A surface is said to $SO(2)$-invariant if it is invariant under the action of this group. Such surfaces are locally parametrized by
\[ f(u, v) = (x(u)\cos{v}, x(u)\sin{v}, y(u)) \in \mathbb{R}^{0,2,1}, \]
where $x, y$ are real variable functions satisfying $x > 0, \ (x')^{2} + (y')^{2} = 1$. Then, we have 
\[ f_{u} = (x'\cos{v}, x'\sin{v}, y'), \quad f_{v} = (-x\sin{v}, x\cos{v}, 0). \]
Thus, we compute 
\[ g_{11} = (x')^{2}, \quad g_{12} = 0, \quad g_{22} = x^{2}. \]
The non-degeneracy implies $x' \neq 0$. Moreover, since we compute
\begin{eqnarray*}
f_{uu} &=& (x''\cos{v}, x''\sin{v}, y'') = \frac{x''}{x'} f_{u} + \left (-\frac{x''}{x'}y' + y'' \right ) \xi, \\
f_{uv} &=& (-x'\sin{v}, x'\cos{v}, 0) = \frac{x'}{x} f_{v}, \\
f_{vv} &=& (-x\cos{v}, -x\sin{v}, 0) = - \frac{x}{x'} f_{u} + \frac{x}{x'}y' \xi,
\end{eqnarray*}
the coefficients of second fundamental form $h$ hold
\[ h_{11} = -\frac{x''}{x'}y' + y'', \quad h_{12} = 0, \quad h_{22} = \frac{x}{x'}y'. \]
Therefore, we compute that the mean curvature of $SO(2)$-invariant $d$-minimal surfaces is 
\begin{equation}
2\mathcal{H} = g^{ij}h_{ij} = \frac{1}{(x')^{3}}(-x''y' + x'y'') + \frac{y'}{xx'} \equiv 0. \label{inv-min}
\end{equation}
Since $x' \neq 0$, by the coordinate transformation, we can represent $y$ as a function with respect to $x$. Then, the equation (\ref{inv-min}) is equal to the following equation 
\[ \frac{d^{2}y}{dx^{2}} = - \frac{1}{x} \frac{dy}{dx}. \]
By solving the ordinary differential equation, we have
\[ y(x) = C_{1} \log{x} + C_{2} \quad (C_{1}, C_{2} \in \mathbb{R} : \textrm{constants}). \]
Again, when we replace the parameter $x$ with $x(w) = e^{w}$, we get $y(w) = C_{1}w + C_{2}$. In particular, if $C_{1} = 0$, then it is a plane. So, if it is not a plane, by an affine isometry, we obtain 
\[ f(u, v) = (e^{u}\cos{v}, e^{u}\sin{v}, u). \]
The proof is completed.
\end{proof}

\begin{figure}[H]
\begin{center}
\begin{tabular}{c}
 \begin{minipage}{0.5\hsize}
  \begin{center}
   \includegraphics[width=50mm]{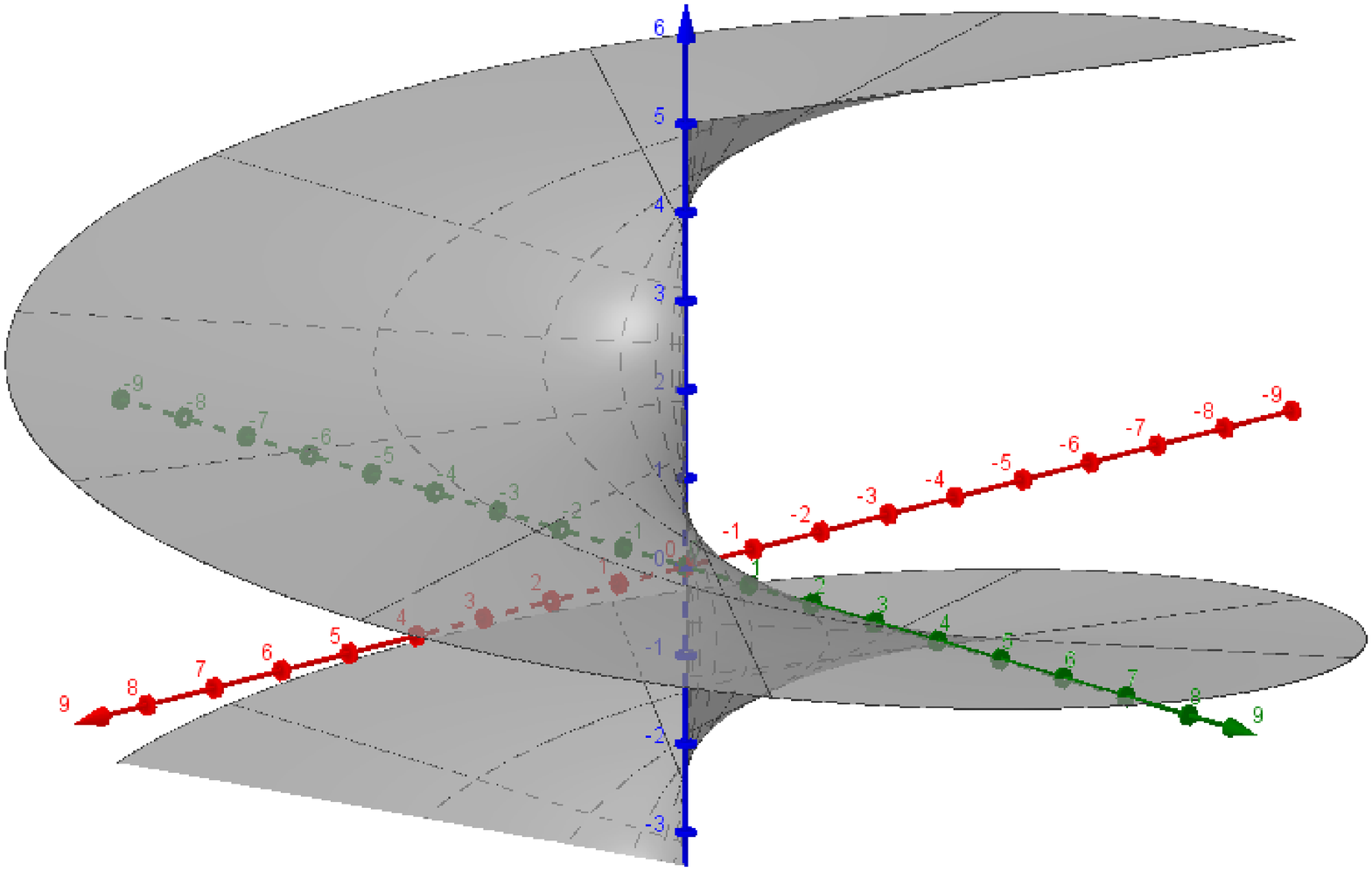}
  \end{center}
  \caption{Elliptic helicoid of the second kind.}
  \label{fig:9}
 \end{minipage}
 \begin{minipage}{0.5\hsize}
  \begin{center}
   \includegraphics[width=55mm]{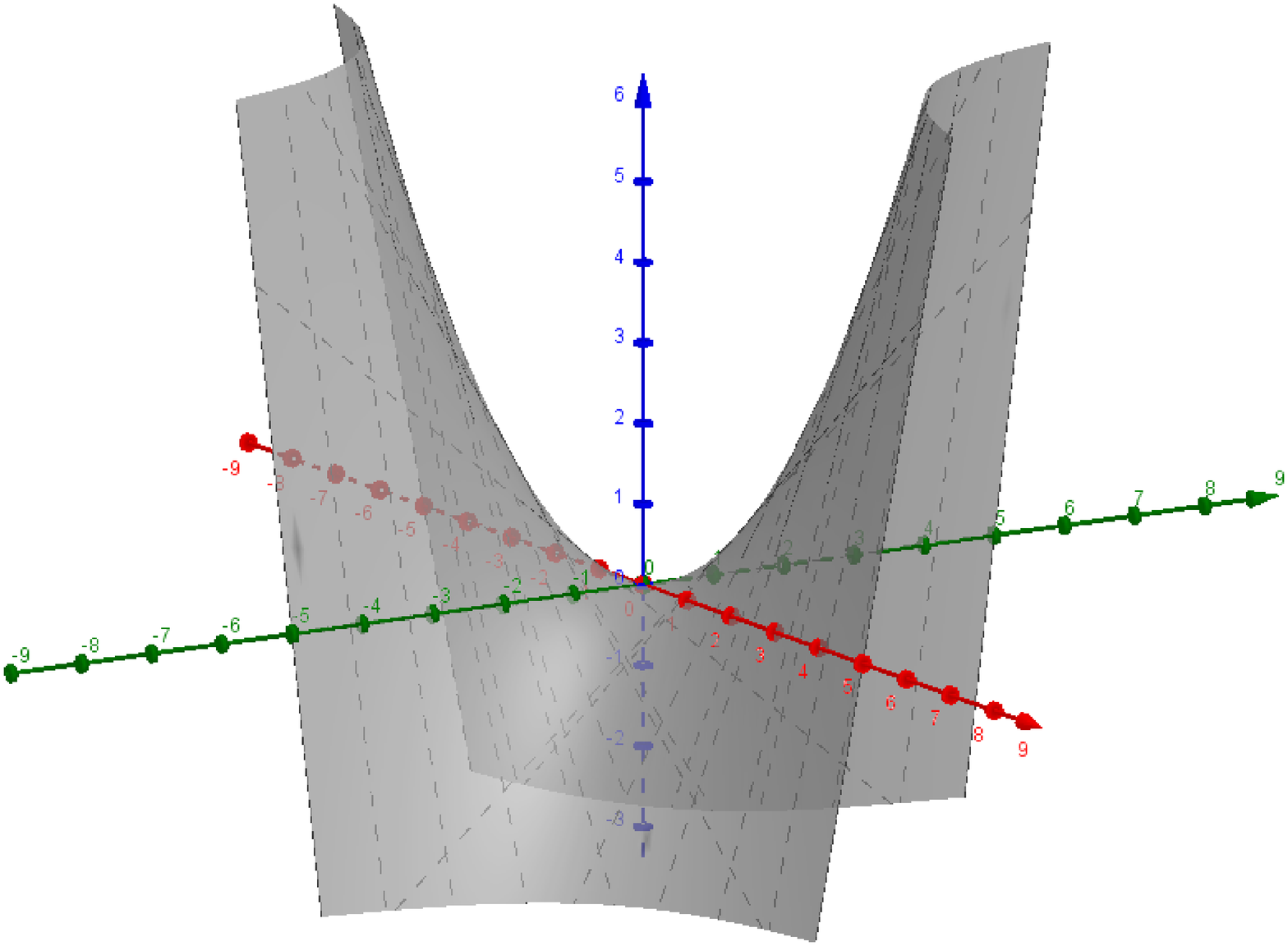}
  \end{center}
  \caption{Minimal hyperbolic paraboloid.}
  \label{fig:10}
 \end{minipage}
\end{tabular}
\end{center}
\end{figure}

\begin{figure}[H]
\begin{center}
\begin{tabular}{c}
 \begin{minipage}{0.5\hsize}
  \begin{center}
   \includegraphics[width=60mm]{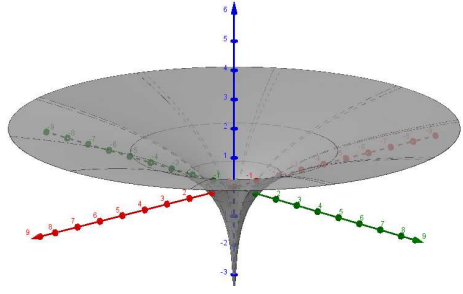}
  \end{center}
  \caption{$d$-minimal rotational surface.}
  \label{fig:11}
 \end{minipage}
\end{tabular}
\end{center}
\end{figure}

\begin{rem} \rm
We recall that non-degenerate surfaces are locally expressed by graph surfaces. However, (a) of Proposition \ref{kindofsurf} is an example which can not be entirely expressed as a graph surface.
\end{rem}

We consider the canonical connection $d$ as a linear connection for $\mathbb{R}^{0, 2, 1}$. 
This connection $d$ is a torsion-free connection which is parallel with respect to the degenerate metric $(\cdot , \cdot)$, i.e. $d$ plays the role of Levi-Civita connection. However, since the metric is degenerate, connections having such properties are not unique. For example, let $\lambda \in \mathbb{R}$ be a real parameter, and we define a tensor field $L_{\lambda} \in \Gamma(S^2T^{\ast}\mathbb{R}^{3})$ as
\[ L_{\lambda}(X, Y) := \lambda \sum_{i, j} X_{i}Y_{j}, \]
where the set $\Gamma(S^2T^{\ast}\mathbb{R}^{3})$ expresses the whole of $(0, 2)$-type symmetric tensor fields over $\mathbb{R}^{3}$ and $X, Y$ are vector fields over $\mathbb{R}^{3}$, and we regard $X$ and $Y$ respectively as vector-valued functions 
\[ X=(X_{1}, X_{2}, X_{3}), Y=(Y_{1}, Y_{2}, Y_{3}). \]
Then, when we put $d^{\lambda} := d + L_{\lambda} \xi$, $d^{\lambda}$ is a flat connection over $\mathbb{R}^{0, 2, 1}$ which has the same properties of Levi-Civita connections.

When we consider $d^{\lambda}$-totally geodesic surfaces defined as the case of $d$, 
non-trivial examples appear, i.e. there exist examples which are not planes (refer to fig.~\ref{fig:2}).

As an example satisfying $h^{\lambda} \equiv 0$ except for planes, we find, for instance, 
\[ F(u, v) = \frac{1}{\lambda} \log{|\lambda u + 1|} - u - v, \]
where $u < - \frac{1}{\lambda}, u > - \frac{1}{\lambda}$.

As a remark, let $\nabla$ be a torsion-free, metric connection on $\mathbb{R}^{0,2,1}$. if all non-degenerate plane are $\nabla$-totally geodesic, then it holds $\nabla = d$, i.e. the case of simply isotropic geometry.

\begin{figure}[H]
\begin{center}
\begin{tabular}{c}
 \begin{minipage}{0.5\hsize}
  \begin{center}
   \includegraphics[width=60mm]{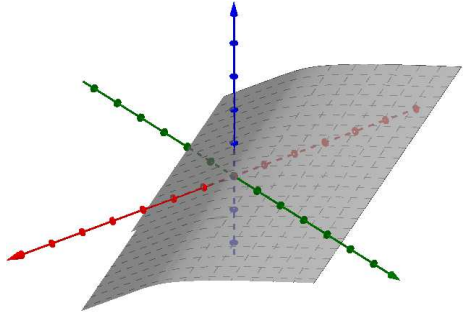}
  \end{center}
  \caption{$d^{\lambda}$-totally geodesic non-plane.}
  \label{fig:2}
 \end{minipage}
\end{tabular}
\end{center}
\end{figure}

\subsection{Representation formula of Weierstrass type for $d$-minimal surfaces}

Let $f : M \rightarrow \mathbb{R}^{0,2,1}$ be a non-degenerate immersion. When we set $f = (f_{1}, f_{2}, f_{3})$, we define Laplacian $\Delta_{g}f$ of $f$ with respect to the induced metric $g$ as Laplacians of each coordinate functions $f_{i} \ (i=1,2,3)$, i.e. 
\[ \Delta_{g}f := (\Delta_{g}f_{1}, \Delta_{g}f_{2}, \Delta_{g}f_{3}). \]

\begin{prop} \label{prop4.2.1} \rm
Let $\mathcal{H}$ be the mean curvature of a non-degenerate immersion $f$. Then, $2\mathcal{H} \xi \in \Gamma(T^{\bot}M)$ is equal to Laplacian $\Delta_{g}f$ of $f$ with respect to the induced metric $g$. In particular, the non-degenerate surface is a $d$-minimal if and only if coordinate functions of $f$ are all harmonic with respect to $g$.
\end{prop}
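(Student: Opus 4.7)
The plan is to run the classical Gauss-formula computation identifying $\Delta_g f$ with the mean curvature vector, taking care that the ambient connection $d$ is flat and that $\xi = (0,0,1)$ is parallel.

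First I would fix a point $x \in M$ and choose a local orthonormal frame $\{e_1, e_2\}$ for $(M, g)$, noting that since $g$ is positive definite the signs $\epsilon_i$ are all $+1$. Viewing $f$ as an $\mathbb{R}^3$-valued function and applying the general formula for the Laplacian recalled in Section 2 componentwise, I would write
\[
\Delta_g f \;=\; \sum_{k=1}^{2}\bigl[\, e_k(e_k(f)) - (\nabla_{e_k} e_k)(f)\,\bigr].
\]
Next I would identify the ingredients in ambient terms. Since $f$ is the immersion, $e_k(f) = df(e_k)$ is just $e_k$ regarded as a vector in $\mathbb{R}^3$, so
\[
e_k(e_k(f)) \;=\; d_{e_k}\bigl(df(e_k)\bigr) \;=\; d_{e_k} e_k,
\]
where $d$ is the canonical flat connection on $\mathbb{R}^{0,2,1}$. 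Similarly $(\nabla_{e_k} e_k)(f) = df(\nabla_{e_k} e_k)$, which as a vector in $\mathbb{R}^3$ is the tangential part of $d_{e_k} e_k$.

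Now the Gauss formula established earlier gives $d_{e_k} e_k = \nabla_{e_k} e_k + h(e_k, e_k)\xi$. Substituting and summing,
\[
\Delta_g f \;=\; \sum_{k=1}^{2} h(e_k, e_k)\,\xi \;=\; (\mathrm{tr}_g h)\,\xi \;=\; 2\mathcal{H}\,\xi,
\]
which is the first assertion. Since $\xi = (0,0,1)$, this identity already forces $\Delta_g f_1 \equiv \Delta_g f_2 \equiv 0$ automatically, while $\Delta_g f_3 = 2\mathcal{H}$. Consequently all three coordinate functions of $f$ are $g$-harmonic if and only if $\mathcal{H} \equiv 0$, i.e.\ if and only if $f$ is $d$-minimal, giving the second assertion.

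There is no real obstacle: the only care needed is to respect the roles played by $d$, $\nabla$ and $\xi$, and to notice that the degeneracy of the ambient metric shows up as the fact that two of the three coordinate Laplacians are identically zero regardless of the immersion, so the content of $d$-minimality is entirely concentrated in the third coordinate.
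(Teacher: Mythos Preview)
Your argument is correct and is the standard intrinsic identification of $\Delta_g f$ with the mean curvature vector via the Gauss formula; nothing is missing.

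The paper, however, proceeds differently and more concretely. It uses the fact (established earlier) that any non-degenerate immersion into $\mathbb{R}^{0,2,1}$ is locally a graph $f(u,v)=(u,v,F(u,v))$; in these coordinates the induced metric is simply $du^2+dv^2$, so $\Delta_g$ is the flat Laplacian $\partial_u^2+\partial_v^2$, while $h_{ij}$ are the second partials of $F$. One then reads off $2\mathcal{H}\xi=(0,0,F_{uu}+F_{vv})=\Delta_g f$ in a single line. Your frame computation has the advantage of being coordinate-free and of making transparent why the first two coordinate functions are automatically harmonic; the paper's version has the advantage of exploiting the special feature of $\mathbb{R}^{0,2,1}$ that the induced metric is already flat in graph coordinates, so no orthonormal frame or abstract Laplacian formula is needed.
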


\begin{proof}
Since $f$ is non-degenerate, there exists a coordinate neighborhood $U$ of $M$ such that the local expression of $f$ is 
\[ f(u, v) = (u, v, F(u,v)) \in \mathbb{R}^{0,2,1}, \]
where $F$ is a function on $U$.
By using this coordinate, we get 
\[ 2\mathcal{H} \xi = (0,0,F_{uu}+F_{vv}) = \Delta_{g}f. \]
The proof is completed.
\end{proof}
In case of graph surfaces, Proposition \ref{prop4.2.1} is equivalent to the formula (8) in \cite{PGM}. 

Next, we prepare some simple lemmas. 

\begin{lem} \label{Wei.1} \rm
For a real two variable function $f(u,v)$, we define a complex function $F(w)$ with respect to the complex variable $w = u + iv$ as
\[ F(w) := \frac{\partial f}{\partial u}(u,v) - i \frac{\partial f}{\partial v}(u,v). \]
Then, $F$ is a holomorphic function if and only if $f(u, v)$ is a harmonic function. 
\end{lem}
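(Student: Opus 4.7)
The plan is to unpack the definition of holomorphicity via the Cauchy--Riemann equations and observe that one of the two equations automatically holds while the other is precisely the Laplace equation for $f$.

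First I would write $F(w) = P(u,v) + iQ(u,v)$ with $P := f_u$ and $Q := -f_v$, assuming throughout that $f$ is of class $C^2$ so that the mixed partials $f_{uv}$ and $f_{vu}$ agree. The function $F$ is holomorphic in the variable $w = u + iv$ if and only if the Cauchy--Riemann equations $P_u = Q_v$ and $P_v = -Q_u$ hold.

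Next I would compute each side. The first equation $P_u = Q_v$ becomes $f_{uu} = -f_{vv}$, which is exactly the harmonicity condition $\Delta f = f_{uu} + f_{vv} = 0$. The second equation $P_v = -Q_u$ becomes $f_{uv} = f_{vu}$, which is automatic by the equality of mixed partial derivatives. Consequently the full Cauchy--Riemann system reduces to the single equation $\Delta f = 0$.

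This immediately yields the equivalence: $F$ is holomorphic iff $f$ is harmonic. There is no serious obstacle here; the only point requiring even mild care is the smoothness assumption needed to swap the order of differentiation, which is standard in the context of this paper (all surfaces and functions considered are $C^\infty$).
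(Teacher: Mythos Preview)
Your proof is correct and is exactly the argument the paper intends: its proof consists solely of the line ``The Cauchy-Riemann's equations imply,'' and you have simply written out the details of that computation.
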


\begin{proof}
The Cauchy-Riemann's equations imply.
\end{proof}

\begin{lem} \label{Wei.2} \rm
In $\mathbb{R}^{0,2,1}$, we consider a surface given by 
\[ f(u, v) = ( x(u, v), y(u, v), z(u, v) ) \in \mathbb{R}^{0,2,1}. \]
We define a complex function $\varphi, \psi$ with respect to the complex variable $w = u + iv$ as
\[ \varphi(w) := \frac{\partial x}{\partial u}(u,v) - i \frac{\partial x}{\partial v}(u,v), \quad \psi(w) := \frac{\partial y}{\partial u}(u,v) - i \frac{\partial y}{\partial v}(u,v). \]
Then, the coordinates $(u, v)$ is isothermal if and only if it holds 
\[ \varphi^{2} + \psi^{2} \equiv 0. \]
\end{lem}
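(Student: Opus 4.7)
The plan is a direct computation: expand $\varphi^{2}+\psi^{2}$ and compare real and imaginary parts to the isothermal conditions.

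First I would recall what the induced metric on $M$ looks like. Since the ambient degenerate form is $(\,\cdot\,,\,\cdot\,)=dx^{2}+dy^{2}$ on $\mathbb{R}^{0,2,1}$, the $z$-component of $f$ drops out of $g=f^{*}(\,\cdot\,,\,\cdot\,)$, so
\[ g_{11}=x_{u}^{2}+y_{u}^{2},\qquad g_{12}=x_{u}x_{v}+y_{u}y_{v},\qquad g_{22}=x_{v}^{2}+y_{v}^{2}. \]
The statement that $(u,v)$ is isothermal means exactly $g_{11}=g_{22}$ and $g_{12}=0$ (together with $g_{11}>0$, which is automatic from non-degeneracy).

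Next I would square the given complex functions:
\[ \varphi^{2}=(x_{u}-ix_{v})^{2}=(x_{u}^{2}-x_{v}^{2})-2i\,x_{u}x_{v}, \]
\[ \psi^{2}=(y_{u}-iy_{v})^{2}=(y_{u}^{2}-y_{v}^{2})-2i\,y_{u}y_{v}. \]
Adding these and substituting the metric coefficients gives
\[ \varphi^{2}+\psi^{2}=(g_{11}-g_{22})-2i\,g_{12}. \]
Since this vanishes identically if and only if its real and imaginary parts vanish, we obtain $\varphi^{2}+\psi^{2}\equiv 0 \iff g_{11}=g_{22}\textrm{ and }g_{12}=0$, which is the isothermal condition.

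There is no real obstacle here; the only thing to be careful about is that only the first two coordinates of $f$ enter the induced metric, which is precisely why defining $\varphi,\psi$ in terms of $x,y$ alone (and not involving $z$) suffices to detect isothermality. This is the key structural difference from the classical Euclidean Weierstrass setup, and it is also the reason the lemma holds with just two complex functions rather than three.
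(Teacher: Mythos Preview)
Your proof is correct and follows essentially the same direct computation as the paper, which simply records $\varphi^{2}+\psi^{2}=|f_{u}|^{2}-|f_{v}|^{2}-2i(f_{u},f_{v})$ and concludes. Your version is just a slightly more explicit unpacking of the same identity.
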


\begin{proof}
By direct calculations, we have
\begin{eqnarray*}
\varphi^{2} + \psi^{2} &=& |f_{u}|^{2} - |f_{v}|^{2} - 2 i (f_{u}, f_{v}).
\end{eqnarray*}
This completes the proof.
\end{proof}

\begin{thm} \label{thm4.2.1} \rm
Let $U$ be an open subset of $uv$-plane. In $\mathbb{R}^{0,2,1}$, let $f$ be an immersion on $U$ which is parametrized by $f(u, v)=(x(u, v), y(u, v), z(u, v))$. We assume that $(u, v)$ is the isothermal coordinates and $f$ is $d$-minimal. Then, complex functions $\varphi_{1}, \varphi_{2}, \varphi_{3}$ with respect to the complex variable $w=u+iv$ defined by 
\begin{equation}
\varphi_{1}(w)=\frac{\partial x}{\partial u} - i \frac{\partial x}{\partial v}, \quad \varphi_{2}(w)=\frac{\partial y}{\partial u} - i \frac{\partial y}{\partial v}, \quad \varphi_{3}(w)=\frac{\partial z}{\partial u} - i \frac{\partial z}{\partial v} \label{data1}
\end{equation}
are all holomorphic, and it holds
\begin{equation}
|\varphi_{1}|^2 + |\varphi_{2}|^2 > 0, \quad \varphi_{1}^2 + \varphi_{2}^2 = 0.  \label{data2}
\end{equation}
Moreover, it holds
\[ ( f_{u}, f_{u} ) = ( f_{v}, f_{v} ) = \frac{1}{2}(|\varphi_{1}|^2 + |\varphi_{2}|^2). \]
Conversely, let $U$ be a simply-connected domain on $\mathbb{C}$, and we assume that holomorphic functions $\varphi_{1}(w), \varphi_{2}(w), \varphi_{3}(w)$ satisfy the formula (\ref{data2}). Then, when we set $w = u + iv \in U$, there exists a $d$-minimal surface satisfying the formula (\ref{data1}) such that, for the parametrized expression $f(u, v)=(x(u, v), y(u, v), z(u, v))$, the coordinates $(u, v)$ are isothermal. 
\end{thm}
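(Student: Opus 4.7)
The plan is to handle the two directions symmetrically, using Proposition \ref{prop4.2.1} (harmonicity criterion for $d$-minimality), Lemma \ref{Wei.1} (holomorphicity $\Leftrightarrow$ harmonicity), and Lemma \ref{Wei.2} (the isothermal condition $\varphi_1^2+\varphi_2^2\equiv 0$) as the three principal tools. The essential bridge is that in isothermal coordinates $g=\lambda(du^2+dv^2)$ with $\lambda>0$, harmonicity with respect to $g$ coincides with harmonicity in the standard Euclidean sense, since $\Delta_g=\lambda^{-1}(\partial_u^2+\partial_v^2)$.

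For the forward direction, I would begin with $f=(x,y,z)$ a $d$-minimal immersion in isothermal coordinates. By Proposition \ref{prop4.2.1}, the functions $x$, $y$, $z$ are $\Delta_g$-harmonic, and by the isothermal reduction above this is equivalent to being harmonic in the usual sense on $U\subset\mathbb{C}$. Lemma \ref{Wei.1} then promotes each $\varphi_j$ to a holomorphic function. The algebraic constraint $\varphi_1^2+\varphi_2^2\equiv 0$ is immediate from Lemma \ref{Wei.2} applied to the ambient degenerate form $(\cdot,\cdot)$, which ignores the $z$-component; this is exactly the content of the isothermal hypothesis for the induced metric $g$. A direct expansion $|\varphi_1|^2+|\varphi_2|^2=(x_u^2+x_v^2)+(y_u^2+y_v^2)=(f_u,f_u)+(f_v,f_v)$ combined with $(f_u,f_u)=(f_v,f_v)$ yields the displayed formula, and positivity $|\varphi_1|^2+|\varphi_2|^2>0$ comes from positive-definiteness of $g$ (which holds by assumption of non-degeneracy, since the ambient form is positive semidefinite).

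For the converse, I would exploit simple-connectedness of $U\subset\mathbb{C}$ to integrate: set $f_j(w):=\mathrm{Re}\int_{w_0}^w\varphi_j(\zeta)\,d\zeta$ (plus arbitrary real constants), which is well-defined and harmonic as the real part of a holomorphic primitive, and whose complex derivatives recover the prescribed $\varphi_j$ by the Cauchy--Riemann equations. Define $f:=(f_1,f_2,f_3)\colon U\to\mathbb{R}^{0,2,1}$. Lemma \ref{Wei.2}, together with $\varphi_1^2+\varphi_2^2\equiv 0$, guarantees $(f_u,f_u)=(f_v,f_v)$ and $(f_u,f_v)=0$, while $|\varphi_1|^2+|\varphi_2|^2>0$ forces $(f_u,f_u)>0$; hence $g=f^*(\cdot,\cdot)$ is positive definite, so the induced form is non-degenerate and $(u,v)$ is isothermal. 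The immersion property follows because the positive-definite $g$ with $g_{11}=x_u^2+y_u^2>0$ and $g_{12}=x_ux_v+y_uy_v=0$ forces $(x_u,y_u)$ and $(x_v,y_v)$ to be $\mathbb{R}^2$-linearly independent, so $f_u,f_v$ are independent in $\mathbb{R}^3$. Finally, since each $f_j$ is harmonic in the Euclidean sense and we are in isothermal coordinates, Proposition \ref{prop4.2.1} delivers $d$-minimality.

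The only subtlety I anticipate is in the converse: one must resist the temptation to use the ambient form $(\cdot,\cdot)$ (which is degenerate and cannot by itself detect an immersion into $\mathbb{R}^3$) and instead argue the rank of $df$ via the non-degeneracy of the induced positive-definite $g$. Everything else reduces to translating between the complex derivatives $\varphi_j=\partial_u f_j-i\partial_v f_j$ and the real quantities $(f_u,f_u)$, $(f_v,f_v)$, $(f_u,f_v)$, which are essentially the content of Lemmas \ref{Wei.1} and \ref{Wei.2}.
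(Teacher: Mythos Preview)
Your proposal is correct and follows essentially the same route as the paper: Proposition~\ref{prop4.2.1}, Lemma~\ref{Wei.1}, and Lemma~\ref{Wei.2} in the forward direction, and integration of the $\varphi_j$ via simple-connectedness in the converse. The only cosmetic difference is in verifying that $f$ is an immersion: the paper argues by contradiction on the rank of the Jacobian (if the first two columns were dependent then $\varphi_2=\lambda\varphi_1$ with $\lambda\in\mathbb{R}$, forcing $\varphi_1^2+\varphi_2^2\neq 0$), whereas you argue directly that $(x_u,y_u)$ and $(x_v,y_v)$ are nonzero orthogonal vectors in $\mathbb{R}^2$; these are the same observation packaged differently, and your explicit remark that $\Delta_g=\lambda^{-1}(\partial_u^2+\partial_v^2)$ in isothermal coordinates actually fills a small gap the paper leaves implicit when passing from Proposition~\ref{prop4.2.1} to Lemma~\ref{Wei.1}.
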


\begin{proof}
Since $f$ is $d$-minimal, each coordinate functions are harmonic from Proposition \ref{prop4.2.1}.
Thus, by using Lemma \ref{Wei.1}, each $\varphi_{i}$ are holomorphic. And, since $(u, v)$ is isothermal coordinates, it holds $\varphi_{1}^{2} + \varphi_{2}^{2} \equiv 0$ from Lemma \ref{Wei.2}. Next, since we compute 
\[ |\varphi_{1}|^{2} + |\varphi_{2}|^{2} = x_{u}^{2} + y_{u}^{2} + x_{v}^{2} + y_{v}^{2} = |f_{u}|^{2} + |f_{v}|^{2} = 2|f_{u}|^{2} = 2|f_{v}|^{2} > 0, \]
the former of the claim holds. For the latter, we assume that holomorphic functions $\varphi_{1}, \varphi_{2}, \varphi_{3}$ on a simply-connected domain $U$ satisfy the formula (\ref{data2}). We fix a point $w_{0} \in U$ and define a real function $x = x(u, v)$ as
\[ x(u, v) := \textrm{Re} \int_{w_{0}}^{w} \varphi_{1}(w) dw \quad (w = u + iv \in U). \]
This is well-defined since $U$ is simply-connected. When we act on this formula by the differential operator 
\[ \frac{\partial}{\partial u} - i \frac{\partial}{\partial v} = 2 \frac{\partial}{\partial w}, \]
we have
\[ \frac{\partial x}{\partial u} - i \frac{\partial x}{\partial v} = 2 \frac{\partial}{\partial w} \textrm{Re} \int_{w_{0}}^{w} \varphi_{1}(w) dw = \varphi_{1}(w). \]
As above, when we define $y=y(u, v)$ and $z=z(u, v)$, we have
\[ \frac{\partial y}{\partial u} - i \frac{\partial y}{\partial v} = \varphi_{2}(w), \quad \frac{\partial z}{\partial u} - i \frac{\partial z}{\partial v} = \varphi_{3}(w). \]
From Lemma \ref{Wei.1} again, we see that $x(u,v), y(u,v), z(u,v)$ are harmonic functions on $U$. Next, we prove that the mapping $f = f(u, v)$ gives a surface, i.e. two-dimensional manifold. For the purpose of that, we prove that the Jacobi matrix
\[ 
   \left(
    \begin{array}{ccc}
      x_{u} & y_{u} & z_{u} \\
      x_{v} & y_{v} & z_{v} 
    \end{array}
  \right)
\]
is rank two for any point $w \in U$. We prove by using contradiction, i.e. we assume that there is a point $w' \in U$ such that the rank of its Jacobi matrix is less than one. Since we have 
\[ 0 < |\varphi_{1}|^{2} + |\varphi_{2}|^{2} = (x_{u})^{2} + (x_{v})^{2} + (y_{u})^{2} + (y_{v})^{2}, \]
at the point $w'$, we see that either of column vectors 
\[ 
  \left(
    \begin{array}{c}
       x_{u} \\
       x_{v}
     \end{array}
  \right),
\
  \left(
    \begin{array}{c}
       y_{u} \\
       y_{v}
     \end{array}
  \right) \]
is not the zero vector. So, we suppose that the former is not the zero vector. From the assumption of contradiction, since we may set 
\[ \exists \lambda \in \mathbb{R} \ \textrm{s.t.} \ 
  \left(
    \begin{array}{c}
       y_{u} \\
       y_{v}
     \end{array}
  \right)
= 
  \lambda \left (
    \begin{array}{c}
       x_{u} \\
       x_{v}
     \end{array}
  \right),
          \]
by using $\varphi_{2} = \lambda \varphi_{1}$, we compute 
\[ \{ \varphi_{1}(w') \}^{2} + \{ \varphi_{2}(w') \}^{2} = (1 + \lambda^{2})\{ \varphi_{1}(w') \}^{2} \neq 0 \]
at $w'$. This contradicts the formula (\ref{data2}). Thus, since $f$ is a $C^{\infty}$-immersion, $f(u, v) = (x(u,v), y(u,v), z(u,v))$ gives a surface in $\mathbb{R}^{0,2,1}$, and $(u, v) \in U$ is the isothermal coordinates from the condition (\ref{data2}). In particular, $f$ is a $d$-minimal surface satisfying the formula (\ref{data1}).
\end{proof}

\begin{thm}[Weierstrass-type representation formula for $d$-minimal surfaces] \label{thm4.2.2} \rm
Let $U \subset \mathbb{C}$ be a simply-connected domain, and let $F, G$ be holomorphic functions on $U$, where $F$ does not have zero points on $U$. Then, a mapping 
\[ f(u, v) = \textrm{Re} \int_{w} (F, -i F, G) dw \quad (w := u + iv \in U) \]
gives a $d$-minimal surface in $\mathbb{R}^{0,2,1}$, and the coordinates $(u, v) \in U$ are isothermal. Moreover, it holds 
\[ ( f_{u}, f_{u} ) = ( f_{v}, f_{v} ) = |F|^2. \]
Conversely, a $d$-minimal surface in $\mathbb{R}^{0,2,1}$ locally have the expression as above.
\end{thm}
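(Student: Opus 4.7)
The plan is to derive this theorem as a direct specialization of Theorem \ref{thm4.2.1}, which already contains the main analytic content. For the forward direction, I would set $\varphi_1 := F$, $\varphi_2 := -iF$, and $\varphi_3 := G$, then check the two hypotheses of Theorem \ref{thm4.2.1}. The algebraic condition $\varphi_1^2 + \varphi_2^2 = F^2 + (-iF)^2 = F^2 - F^2 = 0$ is immediate, while $|\varphi_1|^2 + |\varphi_2|^2 = 2|F|^2 > 0$ follows from the assumption that $F$ has no zeros on $U$. Theorem \ref{thm4.2.1} then produces a $d$-minimal surface with the claimed integral representation and isothermal coordinates $(u,v)$, and the length identity $(f_u, f_u) = (f_v, f_v) = \tfrac{1}{2}(|\varphi_1|^2 + |\varphi_2|^2) = |F|^2$ is read off at once.

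For the converse, I start with a $d$-minimal surface and pass to a simply-connected coordinate neighborhood equipped with isothermal coordinates (which exist locally since the induced metric is Riemannian). Theorem \ref{thm4.2.1} supplies holomorphic functions $\varphi_1, \varphi_2, \varphi_3$ satisfying the relations (\ref{data2}). The key algebraic step is to factor the constraint as
\[ \varphi_1^2 + \varphi_2^2 = (\varphi_1 - i\varphi_2)(\varphi_1 + i\varphi_2) \equiv 0. \]
Since both factors are holomorphic on a connected domain, the identity theorem forces one of them to vanish identically, giving either $\varphi_2 = -i\varphi_1$ or $\varphi_2 = +i\varphi_1$. In the favorable case, I set $F := \varphi_1$ and $G := \varphi_3$, and the estimate $|\varphi_1|^2 + |\varphi_2|^2 = 2|F|^2 > 0$ certifies that $F$ has no zeros; the reconstruction formula from Theorem \ref{thm4.2.1} then yields $f = \operatorname{Re}\int_w (F, -iF, G)\, dw$.

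The main obstacle is the sign ambiguity in $\varphi_2 = \pm i \varphi_1$: in the case $\varphi_2 = +i\varphi_1$, the triple does not have the stated shape $(F, -iF, G)$. I would resolve this by a change of orientation of the isothermal chart, replacing $v$ by $-v$ so that the complex parameter is conjugated. Under this substitution each $\varphi_i$ is replaced by $\overline{\varphi_i}$, now holomorphic in the new parameter $\overline{w}$, and the ratio $\varphi_2/\varphi_1 = i$ becomes $\overline{i} = -i$, placing the data in the desired normal form. Equivalently, the reflection $(x,y,z)\mapsto(x,-y,z)$ belongs to $O(0,2,1)$ and flips the sign of $\varphi_2$, reaching the same normalization by an affine isometry. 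Since the statement only asserts a local expression, this harmless adjustment completes the proof.
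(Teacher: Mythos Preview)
Your proof is correct and follows the same strategy as the paper: both directions are reduced to Theorem \ref{thm4.2.1} by taking $(\varphi_1,\varphi_2,\varphi_3)=(F,-iF,G)$ in one direction and extracting $F:=\varphi_1$, $G:=\varphi_3$ in the other. You are in fact more careful than the paper on one point: the paper simply sets $F:=\varphi_1$ without addressing the sign ambiguity $\varphi_2=\pm i\varphi_1$ arising from the factorization $\varphi_1^2+\varphi_2^2=(\varphi_1-i\varphi_2)(\varphi_1+i\varphi_2)=0$, whereas you correctly note that the case $\varphi_2=+i\varphi_1$ is absorbed by an orientation-reversing change of isothermal chart $v\mapsto -v$ (or, equivalently, by the reflection $(x,y,z)\mapsto(x,-y,z)\in O(0,2,1)$).
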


\begin{proof}
For the former of the claim, when we set $\varphi_{1} := F, \varphi_{2} := -iF, \varphi_{3} := G$, it immediately holds from Theorem \ref{thm4.2.1}.
For the latter of the claim, given a $d$-minimal surface,  it is locally considered on a simply-connected domain. From Theorem \ref{thm4.2.1} again, we have the parametrized expression 
\[ f(u, v) = \textrm{Re} \int (\varphi_{1}, \varphi_{2}, \varphi_{3}) dw. \]
Since it satisfies 
\[ |\varphi_{1}|^2 + |\varphi_{2}|^2 > 0, \ \varphi_{1}^2 + \varphi_{2}^2 = 0, \]
setting $F := \varphi_{1}, G := \varphi_{3}$, we obtain the expression which we want.
\end{proof}

Zero points of $G$ correspond with singularities of $d$-minimal surfaces. For example, we see cross-caps on $d$-minimal surfaces. We remark that there exist some singularities not only cross-caps. We state in detail in the next section.

At the end of this section, for Weierstrass type expression formula for $d$-minimal surfaces 
\[ f(u, v) = \textrm{Re} \int_{w} (F, -iF, G) dw \quad (w := u+iv \in U), \]
the function $F$ expresses the induced metric, i.e. it holds $( f_{u}, f_{u} ) = ( f_{v}, f_{v} ) = |F|^2$. On the other hand, the function $G$ is concerned with the second fundamental form $h$ by the following proposition. 

\begin{prop} \rm
Under the situation stated above, it holds
\begin{align*}
h &= \left \{ (\textrm{Re}G)_{u} - \frac{|F|_{u}}{|F|}(\textrm{Re}G) - \frac{|F|_{v}}{|F|}(\textrm{Im}G) \right \}(du^{2} - dv^{2}) \\
&\qquad + \left \{ (\textrm{Re}G)_{v} - \frac{|F|_{v}}{|F|}(\textrm{Re}G) + \frac{|F|_{u}}{|F|}(\textrm{Im}G) \right \}(2dudv) . 
\end{align*}
In particular,
\[ \det{h} = -\left(|G|^{2}_{u} + |G|^{2}_{v}\right) - \left |\frac{G}{F} \right |^{2}\left(|F|^{2}_{u} + |F|^{2}_{v}\right) + 2\left |\frac{G}{F} \right |(|F|_{u}|G|_{u} + |F|_{v}|G|_{v}) . \]
\end{prop}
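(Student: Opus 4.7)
The plan is to compute the second fundamental form directly from the parametrization using the Gauss formula $f_{ij} = \Gamma^{k}_{ij} f_{k} + h_{ij} \xi$, and then specialize to the Weierstrass data $F, G$.

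First, I would read off the induced metric from Theorem \ref{thm4.2.2}: the coordinates are isothermal with $g_{11} = g_{22} = |F|^{2}$ and $g_{12} = 0$. Writing $E := |F|^{2}$, the Christoffel symbols of $\nabla$ for an isothermal metric are the standard
\[ \Gamma^{1}_{11} = \Gamma^{2}_{12} = -\Gamma^{1}_{22} = \frac{E_{u}}{2E} = \frac{|F|_{u}}{|F|}, \qquad \Gamma^{1}_{12} = -\Gamma^{2}_{11} = \Gamma^{2}_{22} = \frac{E_{v}}{2E} = \frac{|F|_{v}}{|F|}. \]
Since $\xi = (0,0,1)$, the first two components of the Gauss formula just recover the Christoffel symbols, while the third component gives
\[ h_{ij} = z_{ij} - \Gamma^{1}_{ij} z_{u} - \Gamma^{2}_{ij} z_{v}, \]
where $z$ is the third coordinate of $f$. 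From the Weierstrass formula, $z_{u} - i z_{v} = G$, so writing $G = p + iq$ with $p = \mathrm{Re}\,G$, $q = \mathrm{Im}\,G$, we have $z_{u} = p$, $z_{v} = -q$, and the Cauchy--Riemann equations give $p_{u} = q_{v}$, $p_{v} = -q_{u}$, making $z$ harmonic as expected from Proposition~\ref{prop4.2.1}. Substituting these into the three expressions for $h_{ij}$ yields the stated formulas for the coefficients of $du^{2} - dv^{2}$ and $2\,du\,dv$; note that $h_{22} = -h_{11}$ is consistent with $d$-minimality.

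For the determinant, $d$-minimality gives $h_{11} + h_{22} = 0$, so $\det h = -h_{11}^{2} - h_{12}^{2}$. Expanding this with the expressions above produces a sum of quadratics in $p, q, p_{u}, p_{v}$ and the coefficients $A := |F|_{u}/|F|$, $B := |F|_{v}/|F|$; the cross-terms linear in $A$ and $B$ collect respectively into multiples of $p p_{u} - q p_{v}$ and $q p_{u} + p p_{v}$, while the pure second-order piece is $p_{u}^{2} + p_{v}^{2}$.

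The main obstacle, and the place where the identity becomes non-obvious, is recognizing three scalar identities that are forced by the Cauchy--Riemann equations for $G$:
\[ p p_{u} - q p_{v} \;=\; p p_{u} + q q_{u} \;=\; |G|\,|G|_{u}, \qquad q p_{u} + p p_{v} \;=\; q p_{u} - p q_{u} \;=\; |G|\,|G|_{v}, \]
together with
\[ p_{u}^{2} + p_{v}^{2} \;=\; p_{u}^{2} + q_{u}^{2} \;=\; |G|_{u}^{2} + |G|_{v}^{2}, \]
the last of which follows by computing $(p p_{u} + q q_{u})^{2} + (q p_{u} - p q_{u})^{2} = (p^{2} + q^{2})(p_{u}^{2} + q_{u}^{2})$ and dividing by $|G|^{2}$. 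Substituting these three identities into the expanded $-(h_{11}^{2} + h_{12}^{2})$ and using $A^{2} + B^{2} = (|F|_{u}^{2} + |F|_{v}^{2})/|F|^{2}$, $|G|/|F| = |G/F|$, produces exactly the claimed formula for $\det h$.
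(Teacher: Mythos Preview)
Your proof is correct and is exactly what the paper means by ``by direct calculations'': you have simply written out those calculations in full, using the Gauss formula in isothermal coordinates and the Cauchy--Riemann equations for $G$, which is the natural and essentially unique route here. The only minor caveat is that your derivation of $p_{u}^{2}+q_{u}^{2}=|G|_{u}^{2}+|G|_{v}^{2}$ via division by $|G|^{2}$ tacitly assumes $G\neq 0$, but this is also implicit in the paper's own statement (since $|G|_{u}$, $|G|_{v}$ appear there) and causes no difficulty.
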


\begin{proof}
By direct calculations. 
\end{proof}

\begin{rem} \rm 
The pair $(F,G)$ is called a \textit{Weierstrass data}. And, for any $\theta \in \mathbb{R}/{2\pi\mathbb{Z}}$, 
\[ f_{\theta}(s,t) = \cos{\theta}\left(\textrm{Re} \int (F, -iF, G) dw\right) + \sin{\theta}\left(\textrm{Im} \int (F, -iF, G) dw\right) \]
is a $d$-minimal surface in $\mathbb{R}^{0,2,1}$ and this gives an isometric deformation.

In fact, for holomorphic functions $F, G$, it follows 
\[ \textrm{Re} \int_{w_{0}}^{w}(-iF, -F, -iG)dw = \textrm{Im} \int_{w_{0}}^{w} (F, -iF, G)dw. \]
Thus, $d$-minimal surfaces defined by the Weierstrass data $(-iF,-iG)$ correspond with the imaginary part of the formulas defined by the Weierstrass data $(F, G)$. For $\theta \in \mathbb{R}/{2\pi\mathbb{Z}}$, when we consider the $d$-minimal surface whose Weierstrass data is $(e^{-i\theta}F, e^{-i\theta}G)$, the given immersion is called an \textit{associated family} and, when we denote $f_{\theta}$, we have the $S^{1}$-family of mappings. Moreover, we see 
\begin{eqnarray*}
f_{\theta}(u, v) &=& \textrm{Re} \int_{w_{0}}^{w} (e^{-i\theta}F, -ie^{-i\theta}F, e^{-i\theta}G) dw \\
&=& \cos{\theta} \left ( \textrm{Re} \int_{w_{0}}^{w} (F, -iF, G) dw \right ) + \sin{\theta} \left (\textrm{Im} \int_{w_{0}}^{w} (F, -iF, G) dw \right ).
\end{eqnarray*}
In particular, when $\theta = 0, \frac{\pi}{2}$, they correspond with the $d$-minimal surfaces given by the real part and imaginary part from $(F, G)$ respectively. Moreover, for any $\theta \in \mathbb{R}/{2\pi\mathbb{Z}}$, since the induced metric of $f_{\theta}$ satisfies
\[ ((f_{\theta})_{u}, (f_{\theta})_{u}) = ((f_{\theta})_{v}, (f_{\theta})_{v}) = |e^{-i\theta}F|^{2} = |F|^{2}, \ ((f_{\theta})_{u}, (f_{\theta})_{v}) = 0, \]
it gives an isometric deformation between $f = f_{0}$ and $f_{\theta}$. 
We call $f_{\frac{\pi}{2}}$ a \textit{conjugate surface} of $f_{0}$.
\end{rem}

\begin{exa*} \rm
\
\begin{itemize}
\item[(0)] When $(F,G)=(\alpha,\beta)$ ($\alpha,\beta \in \mathbb{C}, \alpha \neq 0$), a non-degenerate plane appears.
\item[(1)] When $(F,G)=(z,1)$, we have 
\[ f_{0}(u, v)= \left (\frac{1}{2}(u^2 - v^2), uv, u \right ), \quad  f_{\frac{\pi}{2}}(u, v) = \left (uv, -\frac{1}{2}(u^2 - v^2), v \right ). \]
These are surfaces which have the self-intersection and both give singularities called as cross-caps at $(u,v)=(0,0)$ (refer to Figure \ref{fig:3}).
\item[(2)] When $(F,G)=(e^{z},1)$, we have 
\[ f_{0}(u, v)=(e^{u}\cos{v}, e^{u}\sin{v}, u), \quad f_{\frac{\pi}{2}}(u, v)=(e^{u}\sin{v}, -e^{u}\cos{v}, v) . \]
$f_{0}$ is the $d$-minimal rotational surface given by Proposition \ref{kindofsurf} (3), and $f_{\frac{\pi}{2}}$ is the elliptic helicoid of the second kind (refer to Figure \ref{fig:9}, Figure \ref{fig:11}).
\item[(3)] When $(F,G)=(1,z)$, we have 
\[ f_{0}(u, v)=\left (u, v, \frac{1}{2}(u^2 - v^2) \right ), \quad f_{\frac{\pi}{2}}(u, v) = (u, -v, uv). \]
These both are minimal hyperbolic paraboloids (refer to Figure \ref{fig:10}).
\end{itemize}
\end{exa*}

\begin{rem} \rm
The above Weierstrass-type representation formula is essentially known in \cite{AP}. However, the formula stated in \cite{AP} does not give singularities on surfaces.  In this sense, Theorem \ref{thm4.2.2} is more complete. On the other hand, We can see isotropic minimal surfaces which have isolated singularities in \cite{PGM}. 
\end{rem}

\subsection{Applications}

\begin{thm} \label{thm4.3.1} \rm
Let $(M, g)$ be a connected, two-dimensional complete Riemannian manifold, and let $f : (M, g) \rightarrow \mathbb{R}^{0,2,1}$ be an isometric immersion. Then, $(M, g)$ is isometric to the canonical two-dimensional Euclidean space $\mathbb{R}^{2}$, and the image of $f$ corresponds with an entire graph 
\[ \{ (u, v, F(u, v) ) \in \mathbb{R}^{0,2,1} \ | \ (u, v) \in \mathbb{R}^{2} \}, \]
where $F$ is a $C^{\infty}$-function on $\mathbb{R}^{2}$.
\end{thm}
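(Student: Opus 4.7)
The plan is to identify $M$ with $\mathbb{R}^{2}$ via the ambient projection onto the $xy$-plane. Let $\pi : \mathbb{R}^{0,2,1} \to \mathbb{R}^{2}$ be the canonical projection $(x,y,z) \mapsto (x,y)$. The key observation is that the kernel of the degenerate metric $(\cdot,\cdot)$ is exactly the $z$-axis; since $g = f^{\ast}(\cdot,\cdot)$ is assumed positive definite, no nonzero tangent vector of $M$ can map under $df$ into this kernel. Hence for every $p \in M$ and every nonzero $v \in T_{p}M$ we have $d(\pi \circ f)_{p}(v) \neq 0$, so $\pi \circ f$ is a local diffeomorphism between two-dimensional manifolds. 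A direct computation (equating $g(v,w) = (df(v),df(w))$ with the standard Euclidean inner product of the $xy$-components) shows that $\pi \circ f : (M,g) \to (\mathbb{R}^{2}, g_{\mathrm{std}})$ is in fact a local isometry.

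The second step is to upgrade this to a global isometry using completeness. I would invoke the classical theorem (see, e.g., do Carmo, \emph{Riemannian Geometry}) that a local isometry from a connected complete Riemannian manifold to a connected Riemannian manifold is automatically a Riemannian covering map. Since $(M,g)$ is complete by hypothesis and $\mathbb{R}^{2}$ is connected, $\pi \circ f$ is a covering. But $\mathbb{R}^{2}$ is simply connected, so any connected cover is a single-sheeted cover, i.e.\ a diffeomorphism. Therefore $\pi \circ f : M \to \mathbb{R}^{2}$ is a global isometry, which in particular identifies $(M,g)$ with the canonical Euclidean plane.

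For the graph statement, I would use $(u,v) := \pi \circ f$ as global coordinates on $M$. Writing $f = (f_{1}, f_{2}, f_{3})$, we have $f_{1} = u$ and $f_{2} = v$ tautologically, so in these coordinates
\[
f(u,v) = (u,\, v,\, F(u,v)), \qquad F := f_{3} \circ (\pi \circ f)^{-1} \in C^{\infty}(\mathbb{R}^{2}),
\]
which gives exactly the asserted entire graph.

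The main obstacle is really just citing the covering-map theorem correctly; once the projection is shown to be a local isometry, completeness plus simple connectedness of $\mathbb{R}^{2}$ forces everything. It is perhaps worth remarking that flatness of non-degenerate surfaces, already established in (\ref{flat}), is consistent with this picture (only $\mathbb{R}^{2}$, cylinders, tori, Möbius bands, and Klein bottles can arise as complete connected flat surfaces), but the projection argument bypasses the Killing--Hopf classification entirely and directly singles out $\mathbb{R}^{2}$.
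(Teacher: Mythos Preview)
Your argument is correct and shares the paper's core idea: compose with the projection $\pi$ onto the $xy$-plane, observe that $\pi\circ f$ is a local isometry because the kernel of the degenerate form is the $z$-axis, and then use completeness to promote this to a global isometry. The divergence is only in that last step. You invoke the standard theorem that a local isometry out of a complete connected Riemannian manifold is a Riemannian covering, and then kill the deck group with simple connectedness of $\mathbb{R}^{2}$. The paper instead argues bijectivity by hand: surjectivity because the image is open (local homeomorphism) and closed (completeness is inherited, hence Hopf--Rinow gives closedness in $\mathbb{R}^{2}$), and injectivity by pushing a minimizing geodesic between two points forward to a straight segment in $\mathbb{R}^{2}$ and comparing lengths. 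Your route is cleaner and more conceptual; the paper's is more self-contained and avoids citing the covering theorem. Either way the graph description follows identically once $\pi\circ f$ is known to be a global diffeomorphism.
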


\begin{proof}
We define $C^{\infty}$-functions $\alpha, \beta, \gamma$ on $M$ as
\[ f(x) = (\alpha(x), \beta(x), \gamma(x)) \quad (x \in M). \]
We assume that $\mathbb{R}^{2}$ is the canonical Euclidean space which treats $(u, v)$ as the coordinates, and define a $C^{\infty}$-map $f_{0} : (M, g) \rightarrow \mathbb{R}^{2}$ as 
\[ f_{0}(x) := (\alpha(x), \beta(x)) \quad (x \in M). \]
$f_{0}$ is an isometric immersion. 
We prove that $f_{0}$ is the isometric diffeomorphism. We remark that $\textrm{dim}M = \textrm{dim}\mathbb{R}^{2} = 2$ and, from the inverse function theorem, $f_{0}$ is the locally diffeomorphism. Thus, in order to prove that $f_{0}$ is the isometric diffeomorphism, it is sufficient to prove that $f_{0}$ is bijective.

For the surjectivity, since $f_{0}$ is the locally homeomorphism, $f_{0}$ is the open mapping. Thus, $\textrm{Im}f_{0}$ is an open subset of $\mathbb{R}^{2}$. Next, since isometric mappings preserve the geodesic completeness, from Hopf-Rinow's theorem, $(\textrm{Im}f_{0}, du^{2} + dv^{2}) \subset \mathbb{R}^{2}$ is complete, where we consider $\textrm{Im}f_{0}$ as the metric subspace of $\mathbb{R}^{2}$ naturally. Thus, $\textrm{Im}f_{0}$ is a closed subset of $\mathbb{R}^{2}$. Therefore, since $\textrm{Im}f_{0}$ is the open and closed subset of $\mathbb{R}^{2}$, it holds $\textrm{Im}f_{0} = \mathbb{R}^{2}$, i.e. $f_{0} : M \rightarrow \mathbb{R}^{2}$ is surjective.

For the injectivity, we denote the Riemannian distance with respect to the metric $g$ by $d_{M}$. For arbitrary points $x, y \in M$ which are distinct, since $(M, g)$ is complete, there exist a short geodesic $\delta : [0, 1] \rightarrow M$ such that $\delta(0) = x, \delta(1) = y$. Moreover, since $f_{0}$ is isometric, $f_{0} \circ \delta : [0, 1] \rightarrow \mathbb{R}^{2}$ is a geodesic in $\mathbb{R}^{2}$ which connects $f_{0}(x)$ and $f_{0}(y)$. For a curve $c$, when we denote the length of $c$ by $L(c)$, we see 
\[ 0 < d_{M}(x,y) = L(\delta) = L(f_{0} \circ \delta) = |f_{0}(x) - f_{0}(y)|_{\mathbb{R}^{2}}. \]
This implies $f_{0}(x) \neq f_{0}(y)$, i.e. $f_{0} : M \rightarrow \mathbb{R}^{2}$ is injective. As a remark, we use the fact that geodesics in $\mathbb{R}^{2}$ are straight lines for the last equation above.

In summary, since we obtain that $f_{0} : M \rightarrow \mathbb{R}^{2}$ is a locally isometric diffeomorphism and bijection, it is an isometric diffeomorphism, that is, $(M, g)$ is isometric to the canonical two-dimensional Euclidean space $\mathbb{R}^{2}$. We denote the inverse of $f_{0}$ by $\phi : \mathbb{R}^{2} \rightarrow M$. For any $(u, v) \in \mathbb{R}^{2}$, we have 
\begin{eqnarray*}
f(\phi(u, v)) &=& (\alpha(\phi(u,v)), \beta(\phi(u,v)), \gamma(\phi(u,v))) \\
&=& ((f_{0} \circ \phi)(u,v), (\gamma \circ \phi)(u,v)) = (u, v, F(u,v)),
\end{eqnarray*}
where $F := \gamma \circ \phi$ is a $C^{\infty}$-function on $\mathbb{R}^{2}$. Therefore, the image of $f$ is the entire graph expressed by a function $F$ on $\mathbb{R}^{2}$.
\end{proof}

\begin{cor} \label{cor4.1} \rm
Let $f : M^2 \rightarrow \mathbb{R}^{0,2,1}$ be a connected, complete $d$-minimal surface. Then, the image of $f$ corresponds with the entire graph 
\[ \{ (u, v, \psi(u, v) ) \in \mathbb{R}^{0,2,1} \ | \ (u, v) \in \mathbb{R}^{2} \}, \]
where $\psi$ is a harmonic function on $\mathbb{R}^{2}$.
\end{cor}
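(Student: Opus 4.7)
The plan is to reduce this corollary almost entirely to Theorem \ref{thm4.3.1} together with the graph characterization of $d$-minimal surfaces recorded in Proposition \ref{kindofsurf}(1). The corollary adds almost no new content beyond those two ingredients; it is essentially the observation that completeness forces a $d$-minimal surface to be an entire harmonic graph.

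First I would unpack the hypothesis. A $d$-minimal surface, by the definition given in Section 3, is in particular a non-degenerate immersion $f : M \to \mathbb{R}^{0,2,1}$, so the induced metric $g = f^{\ast}(\cdot,\cdot)$ is a positive definite Riemannian metric on $M$. Together with the standing assumption that $f$ is complete (i.e.\ $(M,g)$ is a complete Riemannian manifold), this puts us exactly in the setting of Theorem \ref{thm4.3.1}. Applying that theorem yields that $(M,g)$ is isometric to the canonical Euclidean plane $(\mathbb{R}^{2}, du^{2}+dv^{2})$ and that the image of $f$ is an entire graph
\[
\{ (u, v, \psi(u, v)) \in \mathbb{R}^{0,2,1} \mid (u, v) \in \mathbb{R}^{2} \}
\]
for some $C^{\infty}$-function $\psi$ on $\mathbb{R}^{2}$.

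Next I would invoke Proposition \ref{kindofsurf}(1), which states that a graph surface in $\mathbb{R}^{0,2,1}$ is $d$-minimal if and only if the defining function is harmonic with respect to the standard Laplacian on $\mathbb{R}^{2}$. Since $f$ is $d$-minimal by hypothesis and since the preceding step has exhibited its image as an entire graph over $\mathbb{R}^{2}$ with defining function $\psi$, we conclude that $\psi$ is harmonic on all of $\mathbb{R}^{2}$, which is exactly the claim.

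There is really no substantive obstacle: the only point that needs a moment of attention is the verification that Theorem \ref{thm4.3.1} is indeed applicable, i.e.\ that a $d$-minimal surface is automatically a non-degenerate isometric immersion of a Riemannian manifold. This is immediate from the definition adopted in Section 3.1 (non-degeneracy of the induced metric is built into the definition of $d$-minimal), so no additional work is required. Equivalently, one could bypass Proposition \ref{kindofsurf}(1) and use Proposition \ref{prop4.2.1} directly: in the graph parametrization $(u,v) \mapsto (u,v,\psi(u,v))$ produced by Theorem \ref{thm4.3.1}, the isothermal identification gives $\Delta_{g}f = (0,0,\psi_{uu}+\psi_{vv})$, which vanishes precisely when $\psi$ is harmonic.
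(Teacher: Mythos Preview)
Your proof is correct and follows essentially the same route as the paper: the corollary sits immediately after Theorem \ref{thm4.3.1}, so the paper implicitly invokes that theorem to obtain the entire-graph description, and then appeals to Proposition \ref{prop4.2.1} (the harmonicity characterization of $d$-minimality) to conclude that $\psi$ is harmonic. Your use of Proposition \ref{kindofsurf}(1) in place of Proposition \ref{prop4.2.1} is equivalent, as you yourself note.
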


\begin{proof}
From Proposition \ref{prop4.2.1} (2), it follows immediately.
\end{proof}

\begin{cor} \label{cor4.2} \rm
Let $M$ be a connected, compact two-dimensional manifold, i.e. a connected closed surface. Then, there exist no non-degenerate immersion $f : M \rightarrow \mathbb{R}^{0,2,1}$.
\end{cor}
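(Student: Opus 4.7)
The plan is to reduce immediately to Theorem \ref{thm4.3.1}. Suppose, for contradiction, that there exists a non-degenerate immersion $f : M \to \mathbb{R}^{0,2,1}$ from a connected, compact two-dimensional manifold $M$. Then the induced metric $g = f^{\ast}(dx^{2}+dy^{2})$ is, by the non-degeneracy assumption, a genuine (positive definite) Riemannian metric on $M$.

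First I would invoke the standard fact that every Riemannian metric on a compact manifold is geodesically complete: closed and metrically bounded subsets of $(M,g)$ are compact by the Heine--Borel property of $M$ itself, so Hopf--Rinow applies. Hence $(M, g)$ fulfills the hypotheses of Theorem \ref{thm4.3.1}, and therefore $(M, g)$ is globally isometric to the canonical Euclidean plane $\mathbb{R}^{2}$.

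The final step is to note that this is impossible: an isometry is in particular a homeomorphism, and $M$ is compact while $\mathbb{R}^{2}$ is not. This contradiction shows that no such $f$ exists.

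There is no real obstacle here; the work has been done in Theorem \ref{thm4.3.1}. The only point to check carefully is that the completeness hypothesis of that theorem is met, but this is automatic once one recalls that any Riemannian metric on a compact manifold is complete. (Alternatively, one could argue directly: writing $f = (\alpha, \beta, \gamma)$, the map $f_{0} = (\alpha, \beta) : M \to \mathbb{R}^{2}$ is a local isometry, hence an open map; since $M$ is compact, $f_{0}(M)$ is both open and compact in the connected space $\mathbb{R}^{2}$, which is impossible. I would prefer the Theorem \ref{thm4.3.1} route for consistency with the surrounding exposition.)
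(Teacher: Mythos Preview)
Your proof is correct and follows essentially the same argument as the paper: assume such an $f$ exists, note that compactness of $M$ makes the induced Riemannian metric complete, apply Theorem~\ref{thm4.3.1} to get a homeomorphism $M \cong \mathbb{R}^{2}$, and derive a contradiction with compactness. The only difference is that you spell out the completeness step and offer an alternative direct argument via the open/compact image of $f_{0}$, neither of which changes the route.
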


\begin{proof}
We prove the corollary by contradiction. We assume that there exist a non-degenerate immersion $f : M \rightarrow \mathbb{R}^{0,2,1}$. When we denote the induced metric by $g$, $(M, g)$ is a connected, compact Riemannian manifold. In particular, it is complete. From Theorem \ref{thm4.3.1}, as we have a homeomorphism $M \cong \mathbb{R}^{2}$, this contradicts the compactness of $M$. 
\end{proof}

Let $f : M \rightarrow \mathbb{R}^{0,2,1}$ be a non-degenerate immersion, and let $h$ be its second fundamental form. Then, we recall that the Gauss-Codazzi equation of the non-degenerate immersion is given by the formula (\ref{gauss-codazzi}), i.e.
\[ (\nabla_{X}h)(Y, Z) = (\nabla_{Y}h)(X, Z) \quad (X, Y, Z \in \Gamma(TM)). \]
By using the flat local coordinates $(u, v)$, the formula (\ref{gauss-codazzi}) is equivalent to
\begin{equation}
(h_{11})_{v} = (h_{12})_{u}, \quad (h_{22})_{u} = (h_{12})_{v}, \label{codazzi}
\end{equation}
where $h_{ij}$ are coefficients of $h$. 

\begin{thm}[The fundamental theorem of non-degenerate surfaces, \cite{Sach}, Theorem 8.8] \label{thm4.3.2} \rm
Let $U \subset \mathbb{R}^{2}$ be a simply-connected domain, and let $(u,v)$ be coordinates on $U$. And, let $h_{11}, h_{12}, h_{22}$ be $C^{\infty}$-functions on $U$. Then, there exist, up to affine isometry, a non-degenerate immersion whose the induced metric and the second fundamental form are 
\[ du^{2} + dv^{2}, \quad h_{11}du^{2} + 2h_{12}dudv + h_{22}dv^{2} \]
respectively if and only if the functions $h_{ij}$ satisfy the Gauss-Codazzi equation (\ref{codazzi}) of the non-degenerate surface.
\end{thm}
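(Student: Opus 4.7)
The plan is to treat the two implications separately, with the nontrivial direction being a straightforward application of the Poincaré lemma on the simply-connected domain $U$, followed by an analysis of the ambiguity to extract uniqueness up to affine isometry.

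For necessity, I would simply observe that if such a non-degenerate immersion exists, then the second fundamental form automatically satisfies the Gauss–Codazzi equation (\ref{gauss-codazzi}), and writing this equation in the flat coordinates $(u,v)$ (so that $\nabla$ reduces to ordinary partial differentiation) yields exactly the two equations in (\ref{codazzi}).

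For sufficiency, my strategy is to normalize $f$ to a graph of the form $f(u,v) = (u,v,F(u,v))$ and reduce the problem to finding $F$ with prescribed second derivatives. A direct computation gives $(f_u,f_u) = (f_v,f_v) = 1$, $(f_u,f_v)=0$, so the induced metric is automatically $du^2+dv^2$, and since $f_{uu}=(0,0,F_{uu})$ etc., the second fundamental form coefficients are $h_{11}=F_{uu}$, $h_{12}=F_{uv}$, $h_{22}=F_{vv}$. Hence it suffices to construct a smooth function $F$ on $U$ whose second partial derivatives equal the prescribed $h_{ij}$. I would do this in two stages using Poincaré's lemma: the $1$-form $\omega_1 := h_{11}\,du + h_{12}\,dv$ is closed precisely by the first Codazzi equation, so on the simply-connected domain $U$ there exists $G_1 \in C^{\infty}(U)$ with $(G_1)_u = h_{11}$ and $(G_1)_v = h_{12}$; analogously the second Codazzi equation gives $G_2$ with $(G_2)_u = h_{12}$ and $(G_2)_v = h_{22}$. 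Then the $1$-form $G_1\,du + G_2\,dv$ is closed because $(G_1)_v = h_{12} = (G_2)_u$, so a potential $F$ with $F_u = G_1$, $F_v = G_2$ exists, and its second derivatives are the prescribed $h_{ij}$.

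Finally I would address uniqueness. Any two functions $F$, $\tilde F$ solving $F_{uu}=h_{11}$, $F_{uv}=h_{12}$, $F_{vv}=h_{22}$ differ by a solution of the system with zero right-hand side, i.e.\ by an affine function $au+bv+c$ on $U$. The map $(u,v,z)\mapsto (u,v,z+au+bv+c)$ belongs to the affine isometry group $\mathrm{Aut}(\mathbb{R}^{0,2,1},d) = O(0,2,1)\ltimes\mathbb{R}^3$ (it is realized by the lower-row entries of $O(0,2,1)$ together with a $z$-translation), so the two graph immersions are congruent via an affine isometry. A further ambiguity in choosing the isothermal parametrization $(u,v)\mapsto (u,v,*)$ (rotating and translating the horizontal $xy$-plane) is also absorbed by $\mathrm{Aut}(\mathbb{R}^{0,2,1},d)$, giving uniqueness up to affine isometry.

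The core of the argument is entirely mechanical once the formulas $h_{ij} = F_{\textit{(second partials)}}$ are recorded, so there is no serious obstacle; the only point deserving care is the uniqueness statement, where one must verify that the group $O(0,2,1)\ltimes\mathbb{R}^3$ really accounts for the full ambiguity (the affine shift of $F$ together with planar isometries of the base), and that the hypothesis of simply-connectedness of $U$ is used in exactly the two places where Poincaré's lemma is invoked.
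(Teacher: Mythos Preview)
The paper does not supply its own proof of this theorem; it is stated with attribution to Sachs (\cite{Sach}, Theorem~8.8) and no argument is given. Your proposal is correct and is the natural proof: the two Codazzi equations \eqref{codazzi} are exactly the closedness conditions needed to integrate $h_{11}\,du+h_{12}\,dv$ and $h_{12}\,du+h_{22}\,dv$ to potentials $G_1,G_2$, and then $G_1\,du+G_2\,dv$ is automatically closed, yielding $F$ with Hessian $(h_{ij})$; the graph $(u,v,F(u,v))$ realizes the prescribed first and second fundamental forms.

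One clarification is worth making in your uniqueness discussion. The coordinates $(u,v)$ on $U$ are fixed by hypothesis, so the issue is not an ``ambiguity in choosing the isothermal parametrization'' but rather that an arbitrary non-degenerate immersion $f:U\to\mathbb{R}^{0,2,1}$ with induced metric $du^2+dv^2$ need not already be a graph. The point is that $\pi\circ f:U\to\mathbb{R}^2$ is then a local isometry of flat domains, and differentiating the orthonormality relations for $(\pi\circ f)_u,(\pi\circ f)_v$ shows all second partials vanish, so $\pi\circ f$ is the restriction of a planar rigid motion; composing with the corresponding element of $O(0,2,1)\ltimes\mathbb{R}^3$ reduces $f$ to graph form. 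After that your affine-shift argument handles the remaining freedom. With this wording adjusted, the proof is complete.
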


From now on, we consider four-dimensional Minkowski space $\mathbb{R}^{4}_{1}$ which equips with the Lorentzian metric 
\[ \langle \cdot, \cdot \rangle_{1} := -dx_{1}^2 + dx_{2}^2 + dx_{3}^2 + dx_{4}^2, \]
where $(x_{1}, x_{2}, x_{3}, x_{4})$ is the canonical coordinates of $\mathbb{R}^{4}$. 
We deal with spacelike surfaces only, i.e. we require that the induced metric of surfaces is positive definite. 

A surface $M$ is called \textit{zero mean curvature} if it holds $\vec{H} \equiv 0$, where $\vec{H}$ is the mean curvature vector field of $M$, and a surface $M$ is called \textit{flat} if it holds $K \equiv 0$, where $K$ is the Gaussian curvature of $M$. We abbreviate zero mean curvature to ZMC. 

On the other hand, $\mathbb{R}^{0,2,1}$ is isometrically embedded in $\mathbb{R}^4_{1}$ by the natural way. In fact, the following mapping 
\begin{equation}
\iota : \mathbb{R}^{0,2,1} \ni (x, y, z) \mapsto (z, x, y ,z) \in \mathbb{R}^{4}_{1} \label{isom_emb} 
\end{equation}
is an isometric embedding. 

\begin{rem} \rm
We give one of the motivations of studying flat and zero mean curvature surfaces. We firstly remark that flat minimal submanifolds in $n$-dimensional Euclidean space $\mathbb{R}^{n}$ and  spacelike flat ZMC surfaces in three dimensonal Minkowski space $\mathbb{R}^{3}_{1}$ are totally geodesic. On the other hand, there exists timelike flat ZMC surfaces in $\mathbb{R}^{3}_{1}$ (\cite{Sato}). Thus, we are interested in the case of spacelike surfaces. 

Next, spacelike flat ZMC surfaces in four-dimensional Minkowski space $\mathbb{R}^{4}_{1}$ are not always planes. In particular, we also remark that spacelike flat ZMC surfaces in four-dimensional semi-Euclidean space $\mathbb{R}^{4}_{2}$ equipped with the neutral metric are totally geodesic again. 
\end{rem}

\begin{thm} \label{thm4.3.3} \rm
Let $f : M^2 \rightarrow \mathbb{R}^{4}_{1}$ be an immersion which gives a non-totally geodesic, connected spacelike flat ZMC surface, and let $h$ be the second fundamental form of $M$. We define a subset $E$ of $M$ as 
\[ E:=\{ x \in M \ | \ h_{x} = 0 \}. \]
Then, it holds the following assertions:
\begin{itemize}
\item[(1)] $M \setminus E$ is an open dense subset of $M$, and it is connected.
\item[(2)] The normal bundle of $M$ is flat, i.e.  the normal curvature $R^{\bot} \equiv 0$.
\item[(3)] $M$ is, by an isometry of $\mathbb{R}^{4}_{1}$, immersed in $\mathbb{R}^{0,2,1} \subset \mathbb{R}^{4}_{1}$, and it is a $d$-minimal surface. 
\end{itemize}
\end{thm}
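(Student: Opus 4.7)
The plan is to establish (3) first, derive (2) as an immediate consequence, and treat (1) using real-analyticity of the Weierstrass-type data. For the setup, since $(M, g)$ is a flat Riemannian $2$-manifold, one can work locally in Cartesian (flat, isothermal) coordinates $(u, v)$, in which $g = du^{2} + dv^{2}$ and $\nabla_{\partial_{u}} \partial_{u} = \nabla_{\partial_{u}} \partial_{v} = \nabla_{\partial_{v}} \partial_{v} = 0$. The spacelike ZMC condition then forces each component $f_{i}$ of $f = (f_{1}, f_{2}, f_{3}, f_{4})$ to be harmonic, so the complex functions $\phi_{i}(w) := \partial_{u} f_{i} - i \partial_{v} f_{i}$ are holomorphic in $w = u + iv$. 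Writing $\Phi = (\phi_{1}, \phi_{2}, \phi_{3}, \phi_{4})$ and extending the Minkowski form $\langle \cdot , \cdot \rangle_{1}$ $\mathbb{C}$-bilinearly, conformality reads $\langle \Phi, \Phi \rangle_{1} = 0$ and the spacelike condition reads $\langle \overline{\Phi}, \Phi \rangle_{1} > 0$. Since $f_{uu}, f_{uv}, f_{vv}$ are purely normal to $M$ and $f_{uu} + f_{vv} = 0$ by ZMC, the Gauss equation collapses to $K = -\langle f_{uu}, f_{uu} \rangle_{1} - \langle f_{uv}, f_{uv} \rangle_{1}$, so flatness is equivalent to the identity $\langle \overline{\Phi'}, \Phi' \rangle_{1} = 0$.

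For (3), the key step is to produce a constant null vector $n \in \mathbb{R}^{4}_{1}$ with $\langle \Phi, n \rangle_{1} \equiv 0$; then $f(M)$ lies in the affine hyperplane $n^{\bot}$, which has signature $(0, 2, 1)$ and is thus, up to an isometry of $\mathbb{R}^{4}_{1}$, a copy of $\mathbb{R}^{0,2,1}$. To produce $n$, I would follow the approach of Aiyama--Palmer \cite{AP} and Magid--Weinstein--Wolfson \cite{MWW}: expand $\Phi = \psi_{+} \ell_{+} + \psi_{-} \ell_{-} + \eta_{1} \xi_{1} + \eta_{2} \xi_{2}$ in a null basis of $\mathbb{R}^{4}_{1}$, and combine the conformality relation $2 \psi_{+} \psi_{-} + \eta_{1}^{2} + \eta_{2}^{2} = 0$ with the flatness identity $\langle \overline{\Phi'}, \Phi' \rangle_{1} = 0$ to conclude that, after a Lorentz rotation, one of $\psi_{\pm}$ is identically zero. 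Equivalently, in a null frame $\{\ell_{+}, \ell_{-}\}$ of the Lorentzian normal bundle of $M$ one shows that the shape operator $A_{\ell_{+}}$ vanishes, so that the null line $\mathbb{R} \ell_{-}$ is parallel in $\mathbb{R}^{4}_{1}$ along $f$; its constant direction is the desired $n$. Once $f(M) \subset n^{\bot} \cong \mathbb{R}^{0,2,1}$ is established, the three surviving coordinates of $f$ remain harmonic, so Proposition \ref{prop4.2.1} gives that $f$ is $d$-minimal.

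Assertion (2) is then immediate: inside $\mathbb{R}^{0,2,1} \subset \mathbb{R}^{4}_{1}$ the rank-$2$ ambient normal bundle of $M$ is globally spanned by the two constant vector fields $\xi = (0, 0, 1)$ (the degenerate direction of $\mathbb{R}^{0,2,1}$) and $n$ (the ambient null normal); both are parallel with respect to the flat connection on $\mathbb{R}^{4}_{1}$, so $\nabla^{\bot}$ is trivial and $R^{\bot} \equiv 0$. For (1), the set $E$ is closed since $h$ is continuous, and in a flat isothermal chart $h_{w_{0}} = 0$ is equivalent to $\phi_{i}'(w_{0}) = 0$ for every $i$ (because $h(\partial_{u}, \partial_{u}) = f_{uu} = \textrm{Re}\, \Phi'$ and $h(\partial_{u}, \partial_{v}) = f_{uv} = - \textrm{Im}\, \Phi'$). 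If $E$ had nonempty interior, then on some chart all $\phi_{i}'$ would vanish on an open set, hence on the whole chart by the identity theorem; by real-analytic unique continuation and the connectedness of $M$, $f$ would then be affine on all of $M$, contradicting the non-totally-geodesic hypothesis. Thus $E$ is contained, chart by chart, in the discrete zero set of a holomorphic function that is not identically zero, so $E$ is discrete in $M$, and removing a discrete subset from a connected $2$-manifold preserves openness, density, and connectedness of the complement. The main obstacle is precisely the construction of the null vector $n$ in (3); either one invokes the theorem of Aiyama--Palmer / Magid--Weinstein--Wolfson directly or one carries out an independent analysis of the Gauss, Codazzi, and Ricci equations in a null normal frame to establish $A_{\ell_{+}} \equiv 0$.
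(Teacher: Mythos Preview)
Your overall strategy---reduce to the Weierstrass data, invoke the Al\'{\i}as--Palmer result to place $f(M)$ in a degenerate hyperplane, and handle (1) via the identity theorem---is essentially what the paper does, only in a different order: the paper cites \cite{AP} directly for (2) (their Corollary~1.2) and then for (3) (their Proposition~3.5), whereas you prove (3) first and try to read off (2). Your treatment of (1) is in fact more complete than the paper's, which asserts discreteness of $E$ without spelling out the analyticity argument.

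There is, however, a genuine error in your derivation of (2) from (3). You claim that once $f(M)\subset\mathbb{R}^{0,2,1}\subset\mathbb{R}^{4}_{1}$, the rank-two normal bundle of $M$ in $\mathbb{R}^{4}_{1}$ is spanned by \emph{two} constant vectors: the degenerate direction $\xi$ of $\mathbb{R}^{0,2,1}$ and the ambient null normal $n$ to the hyperplane. But under the embedding $\iota(x,y,z)=(z,x,y,z)$, the vector $\xi=(0,0,1)$ is sent to $(1,0,0,1)$, and the Lorentzian orthogonal complement of $\iota(\mathbb{R}^{0,2,1})=\{x_{1}=x_{4}\}$ is also spanned by $(1,0,0,1)$. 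That is, $\iota_{*}\xi$ and $n$ are the \emph{same} null line; a degenerate hyperplane contains its own normal. The second normal direction to $M$ in $\mathbb{R}^{4}_{1}$ genuinely varies with the point (for a graph $(u,v,\varphi)$ it is proportional to $(1,2\varphi_{u},2\varphi_{v},-1)$), so your ``two constant normals'' argument collapses. The conclusion $R^{\bot}\equiv 0$ is still correct and still follows from (3): the single parallel null normal $n$ gives $A_{n}=0$, hence $[A_{n},A_{\eta}]=0$ for every normal $\eta$, and in a rank-two Lorentzian normal bundle the Ricci equation (\ref{ricci_eq}) then forces $R^{\bot}=0$. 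You should replace your argument with this one, or simply cite \cite{AP} as the paper does.

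A minor point: the references you name as ``Aiyama--Palmer'' and ``Magid--Weinstein--Wolfson'' are in fact Al\'{\i}as--Palmer \cite{AP} and Ma--Wang--Wang \cite{MWW}.
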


\begin{proof}
The claim (1) is easily proved that $E$ is a closed subset of $M$. 
Since $M$ is not totally geodesic, we obtain $\overset{\circ}{E} = \emptyset$. Moreover, we see that the set $E$ do not have accumulation points, that is, the set $E$ is a discrete subset of $M$ which is made of isolated points. Thus, $M \setminus E$ is an open dense subset of $M$. And, since $M$ is connected and $E$ is discrete, it is proved for $M \setminus E$ to be connected.

For (2), see Corollary 1.2 in \cite{AP}. The claim (3) is proved by using Proposition \ref{prop4.2.1} and Proposition 3.5 in \cite{AP}
\end{proof}

\begin{rem} \rm
The set $E$ is a discrete subset of $M$ consisted of isolated points. As an example which satisfies $E \neq \emptyset$, when we define a $C^{\infty}$-immersion $f : \mathbb{R}^2 \rightarrow \mathbb{R}^{0,2,1} \subset \mathbb{R}^{4}_{1}$ as 
\[ f(u, v) := (u^3 - 3uv^2, u, v, u^3 - 3uv^2), \]
it is a spacelike flat ZMC surface which satisfies $h=0$ at the origin $(0, 0)$ only. 
\end{rem}

Let $f : M \rightarrow \mathbb{R}^{0,2,1}$ be a $d$-minimal surface. Then, by the isometric embedding $\iota$ given (\ref{isom_emb}), we see that $M$ is a spacelike flat ZMC surface in $\mathbb{R}^4_{1}$. 
$M$ is a spacelike flat surface since $\iota$ is an isometric embedding. To show that $M$ is ZMC, we directly calculate the mean curvature vector field of $M$. By using a harmonic function $\varphi$, since we can locally expressed by 
\[ f(u, v) = (u, v, \varphi(u, v)), \]
from the composition of $\iota$, we have 
\[ (\iota \circ f)(u, v) = (\varphi(u, v) , u, v, \varphi(u, v)). \]
Thus, we compute that the mean curvature vector field $\vec{H}$ is 
\[ 2\vec{H} = (\iota \circ f)_{uu} + (\iota \circ f)_{vv} = (\varphi_{uu} + \varphi_{vv})(1, 0, 0, 1) \equiv 0. \]
Therefore, we obtain the following corollary. 

\begin{cor} \label{cor} \rm
Let $X$ be the set of the isometric classes of spacelike flat ZMC surfaces in $\mathbb{R}^{4}_{1}$, and let $Y$ be the set of equivalence classes of $d$-minimal surfaces in $\mathbb{R}^{0,2,1}$ by a subgroup 
\[ K:=  \left. \left \{
    \left(
    \begin{array}{ccc}
      \multicolumn{2}{c}{\raisebox{-2.0ex}{\LARGE{\textit{T}}}} & 0 \\
       & &0 \\
      0 & 0& c
    \end{array}
  \right)
 \ \right | \ c \neq 0, \ T \in O(2) \right \} \ltimes \mathbb{R}^3 \subset \textrm{Aut}(\mathbb{R}^{0, 2, 1}, d). \] 
Then, except for planes, we have that $X$ and $Y$ are in one-to-one correspondence. 
\end{cor}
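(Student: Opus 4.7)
The plan is to construct mutually inverse maps $\Phi \colon Y \to X$ and $\Psi \colon X \to Y$. Define $\Phi([f]_K) := [\iota \circ f]$; the computation immediately preceding the corollary shows that $\iota \circ f$ is spacelike, flat, and has vanishing mean curvature vector in $\mathbb{R}^4_1$, so this assigns a class in $X$ to each $[f]_K \in Y$. For $\Psi$, Theorem \ref{thm4.3.3}(3) supplies, for every non-totally-geodesic spacelike flat ZMC representative $\tilde f \colon M \to \mathbb{R}^4_1$, an ambient isometry $A$ of $\mathbb{R}^4_1$ such that $A \circ \tilde f(M) \subset \iota(\mathbb{R}^{0,2,1})$ and $\iota^{-1} \circ A \circ \tilde f$ is a $d$-minimal surface; set $\Psi([\tilde f]) := [\iota^{-1} \circ A \circ \tilde f]_K$.

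To verify that $\Phi$ descends to the $K$-quotient, I would extend each generator of $K$ to an isometry of $\mathbb{R}^4_1$ stabilizing $\iota(\mathbb{R}^{0,2,1})$. The $O(2)$-factor acting on $(x, y)$ lifts to a rotation or reflection of the $(x_2, x_3)$-plane fixing $(x_1, x_4)$; the scaling $z \mapsto cz$ ($c \neq 0$) lifts to the Lorentz boost of the $(x_1, x_4)$-plane with diagonal entries $\tfrac12(c + c^{-1})$ and off-diagonal entries $\tfrac12(c - c^{-1})$, which one checks sends $(z, x, y, z) \mapsto (cz, x, y, cz)$; and each translation of $\mathbb{R}^{0,2,1}$ extends to the obvious translation of $\mathbb{R}^4_1$. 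Composing, $K$-equivalent $d$-minimal surfaces have $\mathbb{R}^4_1$-isometric $\iota$-images, so $\Phi$ is well-defined on $Y$.

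Surjectivity of $\Phi$ is exactly the content of Theorem \ref{thm4.3.3}(3). The principal obstacle is injectivity, equivalently well-definedness of $\Psi$: if some $B \in \mathrm{Iso}(\mathbb{R}^4_1)$ carries $\iota \circ f_1$ onto $\iota \circ f_2$, I must produce an element of $K$ relating $f_1$ to $f_2$. I would first argue that $B$ preserves $\iota(\mathbb{R}^{0,2,1})$ setwise: by Theorem \ref{thm4.3.3}(1), each ZMC surface has a dense open non-totally-geodesic locus, and at any such point the affine $3$-plane spanned by the tangent plane together with the image of the second fundamental form coincides with $\iota(\mathbb{R}^{0,2,1})$, so $B$ must map this hyperplane to itself. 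Then, analyzing the stabilizer of $\iota(\mathbb{R}^{0,2,1})$ in $\mathrm{Iso}(\mathbb{R}^4_1)$ — such an isometry preserves the unique null line $\mathrm{span}\{(1,0,0,1)\}$ of the degenerate induced metric, and the Lorentz orthogonality relations together with this constraint restrict the admissible linear parts — one identifies the induced action on $\mathbb{R}^{0,2,1}$ with an element of $K$.

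With both maps well-defined, $\Phi \circ \Psi = \mathrm{id}_X$ and $\Psi \circ \Phi = \mathrm{id}_Y$ follow directly from the constructions, and the exclusion of planes is essential because Theorem \ref{thm4.3.3}(3) requires a non-totally-geodesic surface in order to canonically recover the ambient hyperplane. I expect the stabilizer calculation — particularly ruling out extra identifications arising from the shear degrees of freedom in $\mathrm{Aut}(\mathbb{R}^{0,2,1}, d) \setminus K$ — to be the most delicate step of the argument.
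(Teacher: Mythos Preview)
Your strategy is exactly the paper's: its entire proof is the single remark that $K$ ``corresponds to the subgroup of [the] isometric group of $\mathbb{R}^{4}_{1}$ which preserves the degenerate subspace $\mathbb{R}^{0,2,1}$,'' i.e.\ precisely the stabilizer identification you outline. You have simply made the maps $\Phi,\Psi$ and the lifting of $K$ to $\mathrm{Iso}(\mathbb{R}^4_1)$ explicit.

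The step you yourself flag as ``most delicate'' is, however, a genuine gap---and the paper's one-line proof shares it. The stabilizer of $\iota(\mathbb{R}^{0,2,1})$ in $\mathrm{Iso}(\mathbb{R}^4_1)$ does \emph{not} restrict to $K$; it induces the full group $\mathrm{Aut}(\mathbb{R}^{0,2,1},d)$, including the shears $(x,y,z)\mapsto(x,y,z+ax+by)$ you were worried about. Indeed, the null rotation $L\in O(1,3)$ determined by $L(1,0,0,1)=(1,0,0,1)$, $L(0,1,0,0)=(a,1,0,a)$, $L(0,0,1,0)=(b,0,1,b)$ and $L(1,0,0,-1)=(a^2{+}b^2{+}1,\,2a,\,2b,\,a^2{+}b^2{-}1)$ is a Lorentz isometry preserving $\iota(\mathbb{R}^{0,2,1})$ and inducing exactly that shear on $\mathbb{R}^{0,2,1}$. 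Consequently the $d$-minimal surfaces $f_1(u,v)=(u,v,e^u\cos v)$ and $f_2(u,v)=(u,v,e^u\cos v+u)$ satisfy $L\circ(\iota\circ f_1)=\iota\circ f_2$, so they lie in the same class of $X$; yet they are not $K$-equivalent, since no $K$-transform of the height function $e^u\cos v$ can acquire the linear term $u$. Thus $\Phi$ as you define it is not injective. The quotient on the $\mathbb{R}^{0,2,1}$ side that would make the correspondence a bijection is by all of $\mathrm{Aut}(\mathbb{R}^{0,2,1},d)$, not by the subgroup $K$.
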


\begin{proof}
It is obvious as long as we remark that this subgroup $K$ corresponds to the subgroup of isometric group of $\mathbb{R}^{4}_{1}$ which preserves the degenerate subspace $\mathbb{R}^{0,2,1} \subset \mathbb{R}^{4}_{1}$.
\end{proof}

Regarding minimal surfaces in $\mathbb{R}^{3}$, maximal surfaces in $\mathbb{R}^{3}_{1}$ and $d$-minimal surfaces in $\mathbb{R}^{0,2,1}$, we have 
\begin{eqnarray*}
&& \{ \textrm{minimal surfaces, maximal surfaces and} \ d\textrm{-minimal surfaces} \} \\
&\subset& \{ \textrm{spacelike ZMC surfaces in} \ \mathbb{R}^{4}_{1} \}.  
\end{eqnarray*}
In fact, for the spaces $\mathbb{R}^{3}$ and $\mathbb{R}^{3}_{1}$, there exist isometric embeddings defined by
\begin{eqnarray*}
&\mathbb{R}^{3}& \ni (x, y, z) \mapsto (0, x, y, z) \in \mathbb{R}^{4}_{1}, \\
&\mathbb{R}^{3}_{1}& \ni (x, y, z) \mapsto (x, y, z, 0) \in \mathbb{R}^{4}_{1}
\end{eqnarray*}
respectively. Therefore, we see that there quite fruitfully exist ZMC surfaces in $\mathbb{R}^{4}_{1}$.

In general, singularity points appear in $d$-minimal surfaces. Refer to the figures from \ref{fig:3} to \ref{fig:8} as such examples. From the Whitney's criterion, a cross-cap appears in Figure \ref{fig:3}, and from the Saji's criterion (\cite{Sa}), a $D_{4}^{-}$-type singularity appears in Figure \ref{fig:7}. Other singularities have been not identified and classified. 

At the end of this paper, we give a table which compares properties among each surfaces. We assume the connectedness of surfaces; 
\begin{table}[htb]
  \begin{center}
     \begin{tabular}{|c|c|c|c|} \hline
          & min. & max. & $d$-min. \\ \hline
        Compact & $\nexists$ & $\nexists$ & $\nexists$ (Cor. \ref{cor4.2}) \\ \hline
        Entire graph & Planes only & Planes only & $\exists$ (Prop. \ref{kindofsurf}) \\ \hline
        Singularity & $\nexists$ & $\exists$ (\cite{FSUY}) & $\exists$ \\ \hline
        Complete & $\exists$ & Planes only & $\exists$ (Thm. \ref{thm4.3.1}) \\ \hline
        Gaussian curvature & $\leq 0$ & $\geq 0$ & $\equiv 0$ \\ \hline
     \end{tabular}
  \end{center}
\caption{} \label{tab:1}
\end{table}

where, in terms of singularity, the symbol $\exists$ expresses that singularities appear, and in terms of otherwise,  $\exists$ expresses that there exist such surfaces which are not planes. In addition to, the abbreviations min., max. and $d$-min. are minimal surfaces in $\mathbb{R}^{3}$, maximal surfaces in $\mathbb{R}^{3}_{1}$ and $d$-minimal surfaces in $\mathbb{R}^{0,2,1}$ respectively.

\section*{Acknowledgment}
The author would like to express his deepest gratitude to his advisor, Professor
Takashi Sakai. The author is also very grateful to Shintaro Akamine and Luis Carlos Barbosa da Silva for their comments and valuable advices.

\newpage
$\bullet$ Examples of isolated singularities whose the rank of Jacobi matrix is one for $C^{\infty}$-mappings $f$ giving $d$-minimal surfaces. 
\begin{figure}[H]
\begin{center}
\begin{tabular}{c}
 \begin{minipage}{0.5\hsize}
  \begin{center}
   \includegraphics[width=40mm]{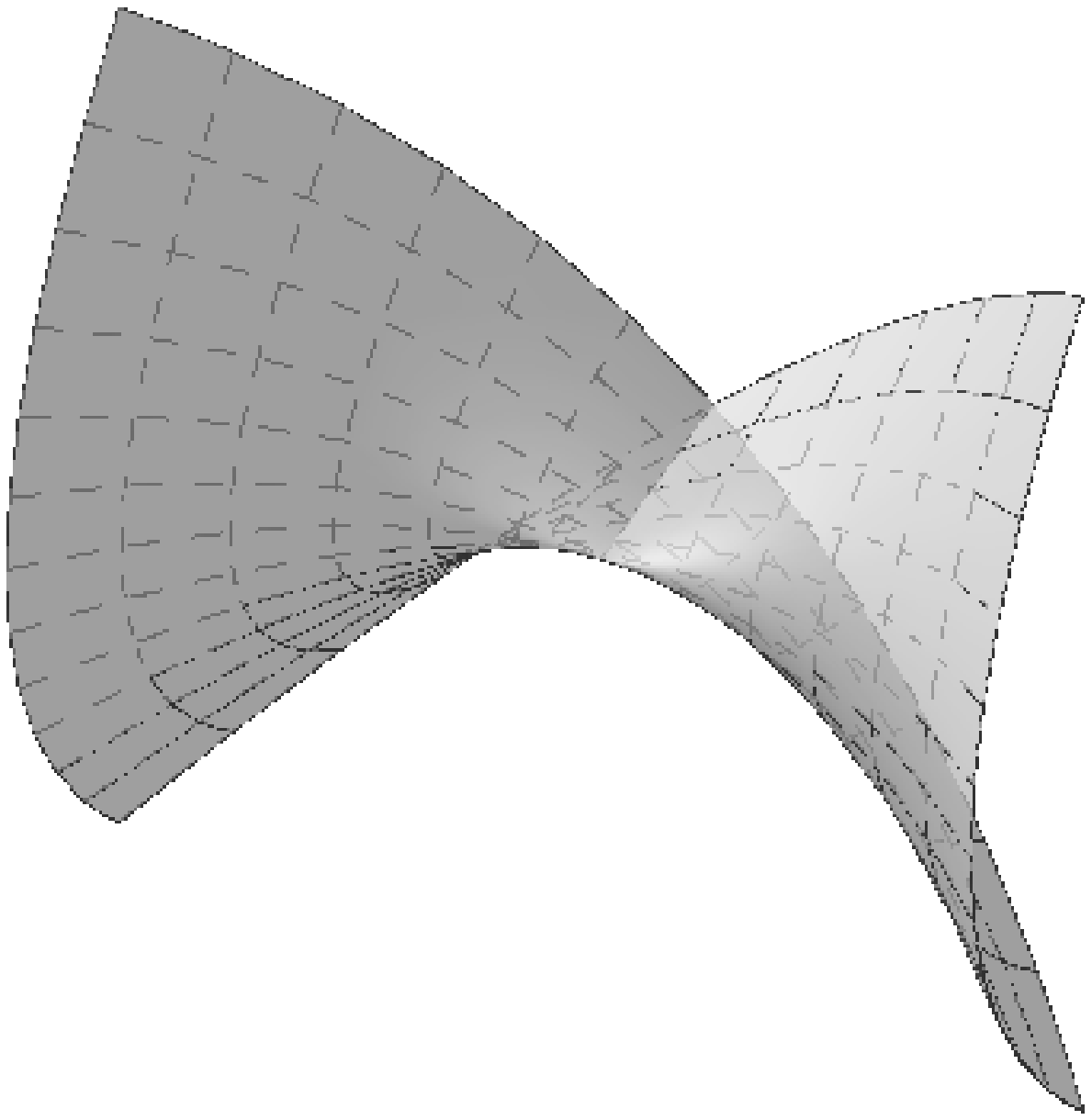}
  \end{center}
  \caption{$(F,G)=(z,1)$.}
  \label{fig:3}
 \end{minipage}
 \begin{minipage}{0.5\hsize}
  \begin{center}
   \includegraphics[width=45mm]{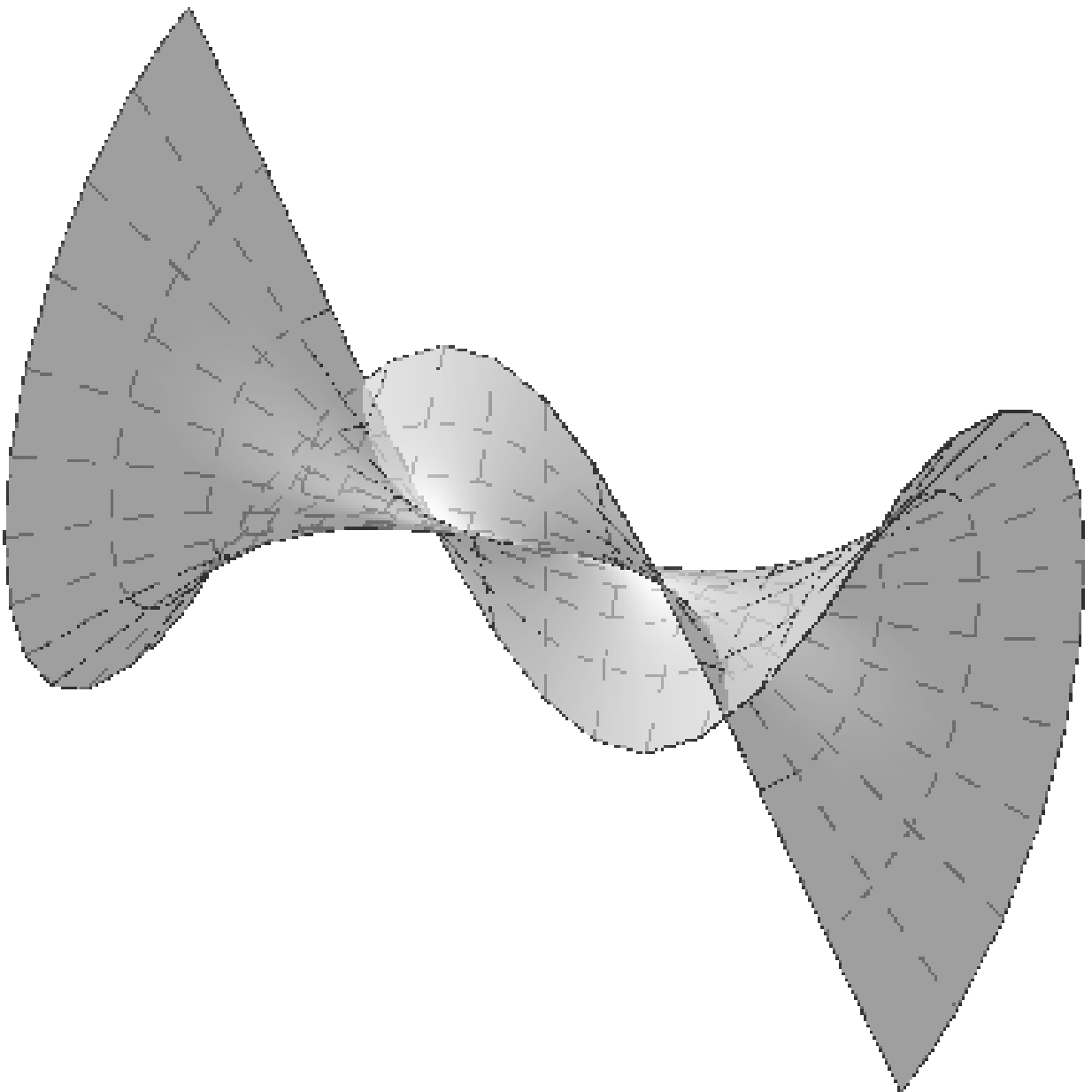}
  \end{center}
  \caption{$(F,G)=(z^2,1)$.}
  \label{fig:4}
 \end{minipage}
\end{tabular}
\end{center}
\end{figure}
\begin{figure}[H]
\begin{center}
\begin{tabular}{c}
 \begin{minipage}{0.5\hsize}
  \begin{center}
   \includegraphics[width=55mm]{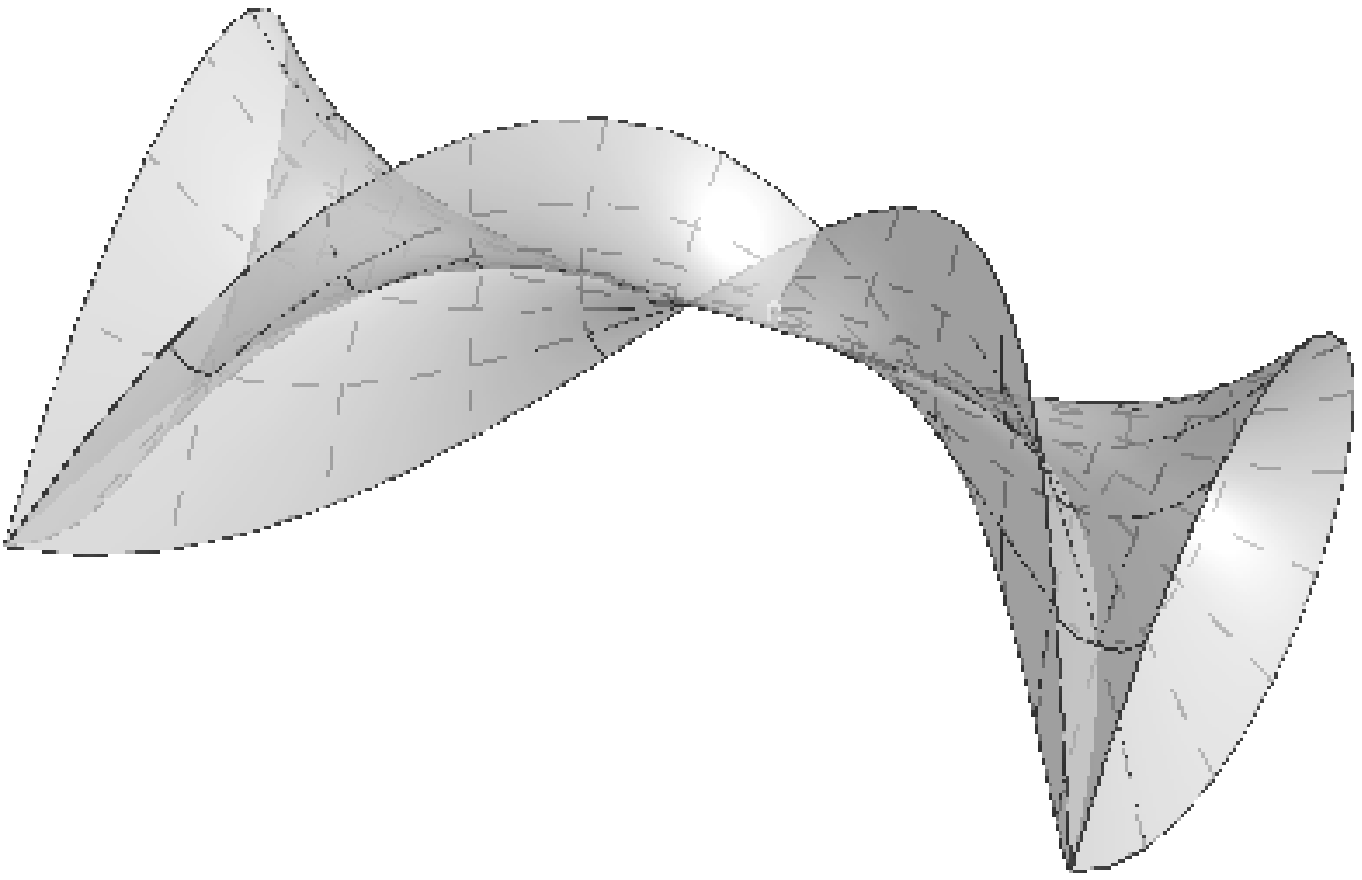}
  \end{center}
  \caption{$(F,G)=(z^3,1)$.}
  \label{fig:5}
 \end{minipage}
 \begin{minipage}{0.5\hsize}
  \begin{center}
   \includegraphics[width=55mm]{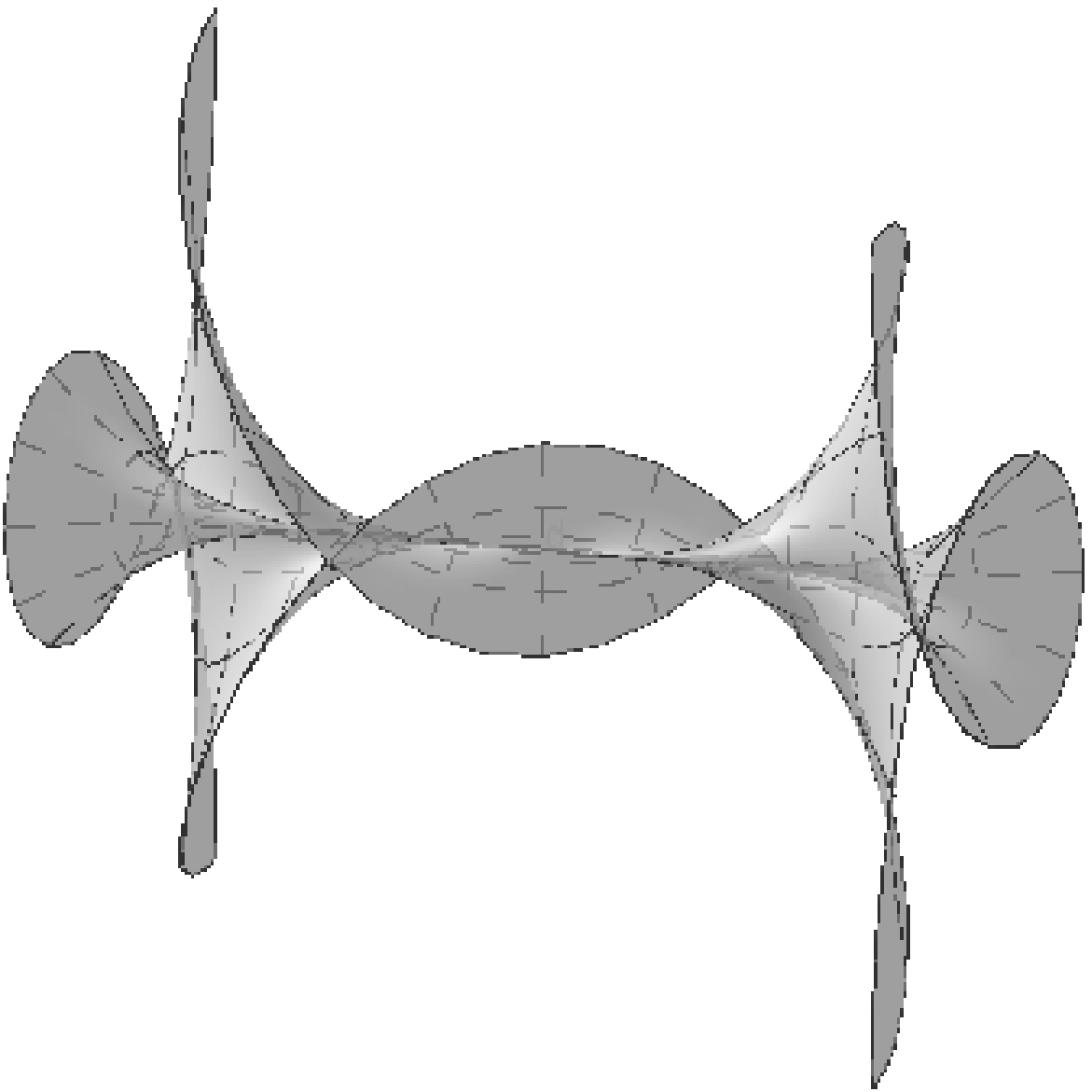}
  \end{center}
  \caption{$(F,G)=(z^4,1)$.}
  \label{fig:6}
 \end{minipage}
\end{tabular}
\end{center}
\end{figure}

$\bullet$ Examples of isolated singularities whose the rank of Jacobi matrix is zero for $C^{\infty}$-mappings $f$ giving $d$-minimal surfaces. 
\begin{figure}[H]
\begin{center}
\begin{tabular}{c}
 \begin{minipage}{0.5\hsize}
  \begin{center}
   \includegraphics[width=30mm]{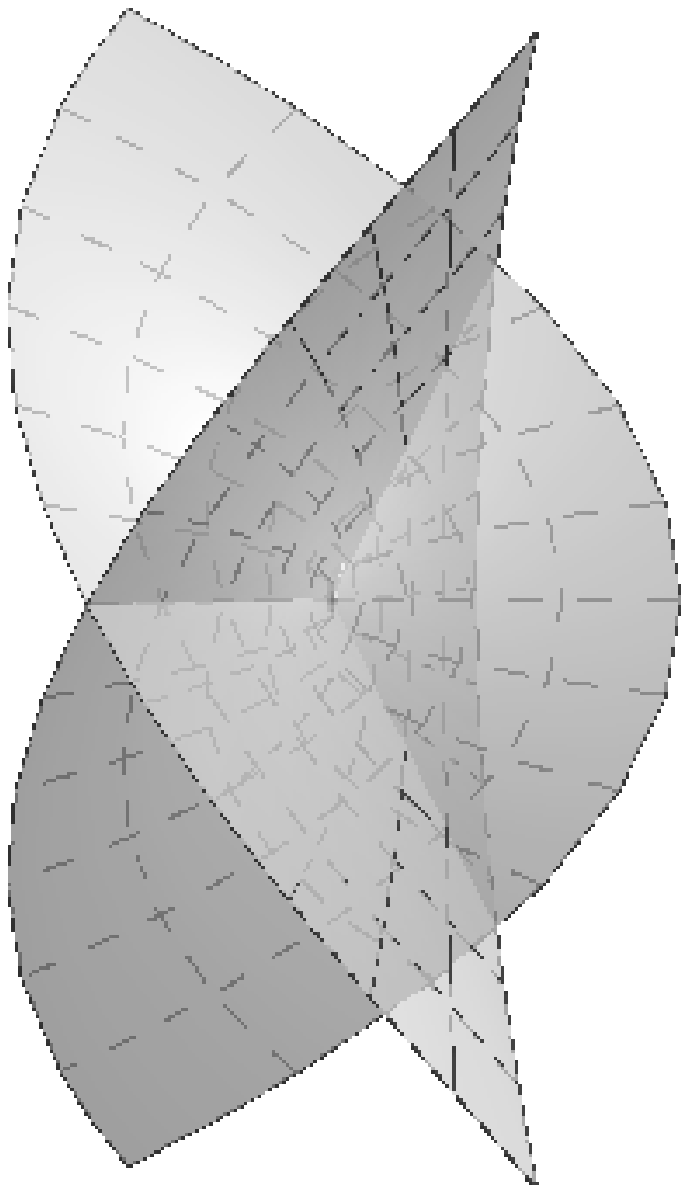}
  \end{center}
  \caption{$(F,G)=(z,z^2)$.}
  \label{fig:7}
 \end{minipage}
 \begin{minipage}{0.5\hsize}
  \begin{center}
   \includegraphics[width=40mm]{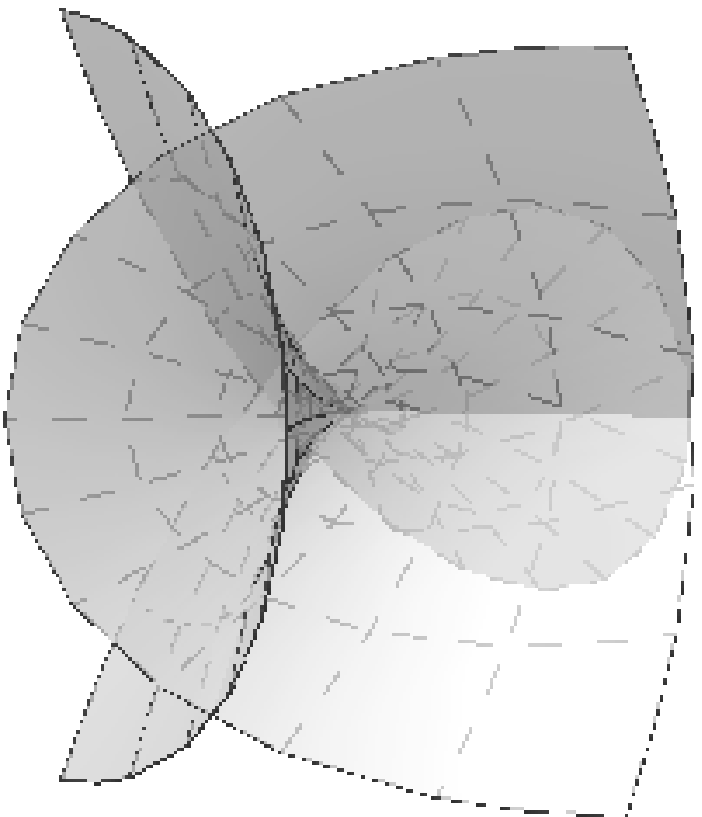}
  \end{center}
  \caption{$(F,G)=(z^2,z)$.}
  \label{fig:8}
 \end{minipage}
\end{tabular}
\end{center}
\end{figure}


\begin{thebibliography}{99} 
\bibitem{AP} L. J. Al$\acute{\textrm{i}}$as, B. Palmer, \textit{Curvature properties of zero mean curvature surfaces in four-dimensional Lorentzian space forms}, Math. Proc. Camb. Phil. Soc. \textbf{124} (1998), 315--327.
\bibitem{An} H. Anciaux,
    \textit{Minimal submanifolds in pseudo-Riemannian geometry}, World Scientific (2011). 
\bibitem{Ay} M. E. Aydin, \textit{Classification results on surfaces in the isotropic $3$-space}, AKU J. Sci. Eng. \textbf{16} (2016), 239--246. 
\bibitem{Ay1} M. E. Aydin, \textit{Constant curvature surfaces in a pseudo-isotropic space}, Tamkang J. Math. \textbf{49} (2018), 221--233. 
\bibitem{BD} A. Bejancu, K. L. Duggal, \textit{Lightlike submanifolds of semi-Riemannian manifolds and
applications}, Kluwer Academic Publishers (1996).
\bibitem{C} B. Y. Chen, \textit{Black holes, marginally trapped surfaces and quasi-minimal surfaces}, Tamkang J. Math. \textbf{40} (2009), 313--341.
\bibitem{FKKRUY} S. Fujimori, Y. Kawakami, M. Kokubu, W. Rossman, M. Umehara and K. Yamada, \textit{Entire zero-mean
curvature graphs of mixed type in Lorentz-Minkowski 3-space}, Quarterly J. Math. \textbf{67} (2016), 801--837.
\bibitem{FSUY} S. Fujimori, K. Saji, M. Umehara and K. Yamada, \textit{Singularities of maximal surfaces}, Math. Z. \textbf{259} (2008), 827--848.
\bibitem{MWW} X. Ma, C. P. Wang and P. Wang, \textit{Global geometry and topology of spacelike stationary surfaces in the 4-dimensional Lorentz space}, Adv. Math. \textbf{249} (2013), 311--347.
\bibitem{NS} K. Nomizu, T. Sasaki, \textit{Affine differential geometry}, Cambridge University Press (1994).
\bibitem{O} M. O'Neill, \textit{Semi-Riemannian geometry with applications to relativity}, Academic Press, London (1983).
\bibitem{PGM} H. Pottmann, P. Grohs and N. J. Mitra, \textit{Laguerre minimal surfaces, isotropic geometry and linear elasticity}, Adv. Comput. Math., \textbf{31} (2009), 391--419. 
\bibitem{Sach} H. Sachs,  \textit{Isotrope geometrie des raumes}, Vieweg, Braunschewig/Wiesbaden (1990). 
\bibitem{Sa} K. Saji, \textit{Criteria for $D_{4}$ singularities of wave fronts}, Tohoku Math. J. \textbf{63} no.1 (2011), 137--147.
\bibitem{Sato} Y. Sato, \textit{On the classification of ruled minimal surfaces in pseudo-Euclidean space}, arXiv:1705.03187. 
\bibitem{Silva1} L. C. B. da Silva, \textit{Rotation minimizing frames and spherical curves in simply isotropic and pseudo-isotropic $3$-spaces}, arXiv:1707.06321. 
\bibitem{Silva2} L. C. B. da Silva, \textit{The geometry of Gauss map and shape operator in simply isotropic and pseudo-isotropic spaces}, arXiv:1801.01187. 
\bibitem{St} O. C. Stoica, \textit{On singular semi-Riemannian manifolds}, Int. J. Geom. Methods Mod. Phys., \textbf{11}, (2014), no. 5, 1450041.
\bibitem{UY} M. Umehara, K. Yamada, \textit{Maximal surfaces with singularities in Minkowski space}, Hokkaido Math. J. \textbf{35} (2006), 13--40.
\end{thebibliography}
\end{document}